\documentclass[11pt]{article}
\usepackage{amsmath,amssymb,amsthm,amscd,esint}
\usepackage{url,endnotes,hyperref}
\usepackage{tikz-cd}

\numberwithin{equation}{section}

\newtheorem{theorem}{Theorem}[section]
\newtheorem{lemma}[theorem]{Lemma}
\newtheorem{definition}[theorem]{Definition}

\theoremstyle{corollary}
\newtheorem{corollary}[theorem]{Corollary}
\theoremstyle{conjecture}
\newtheorem{conjecture}{Conjecture}

\theoremstyle{assumption}

\theoremstyle{proposition}
\newtheorem{proposition}[theorem]{Proposition}
\theoremstyle{remark}

\numberwithin{equation}{section}
\everymath{\displaystyle}

\newcommand{\Ric}{\operatorname{Ric}}
\newcommand{\reg}{\operatorname{reg}}

\newcommand{\PP}{\mathbb P}
\newcommand{\CC}{\mathbb C}

\newcommand{\Area}{\operatorname{Area}}
\newcommand{\Rm}{\operatorname{Rm}}
\newcommand{\diam}{\operatorname{diam}}

\begin{document}

\title{Finite Time Singularities of Collapsing K\"ahler Ricci Flow on Ruled Surfaces}
\author{}
\date{}
\maketitle

\vspace{-3em}
\begin{center}
\Large
\begin{tabular}{cc}
Tongxin Xu$^*$ & \qquad Zhenlei Zhang$^\dagger$
\end{tabular}
\end{center}
\vspace{0.1em}

\begin{abstract}
We prove that any finite time collapsing Kähler Ricci flow on ruled surfaces develops a Type I singularity, such singularity is modeled on the standard product shrinker $\mathbb{P}^1\times \mathbb{C}$. As an application, we obtain the optimal collapse rate of fibers on ruled surfaces.
\end{abstract}
\maketitle
\tableofcontents

\section{Introduction}

Ricci flow was introduced by Hamilton  as an evolution equation for Riemannian metrics in 1980s \cite{hamilton1982three}. In this paper, we mainly consider the Kähler Ricci flow. Let $(M,\omega_{0})$ be a compact Kähler manifold of complex
dimension $n$, and let $\omega(t)$ be the solution of the unnormalized
Kähler--Ricci flow
\begin{equation*}
    \frac{\partial}{\partial t}\omega(t)
    =
    -\Ric(\omega(t)),
    \qquad
    \omega(0)=\omega_{0},
\end{equation*}
on its maximal time interval $[0,T)$. Throughout this paper, we assume
that the maximal existence time is finite, namely $T<\infty$. The flow is
said to develop a Type~I singularity at time $T$ if
\begin{equation}\label{eq:type-I}
    \sup_{M}|\Rm(\omega(t))|_{\omega(t)}
    \leq \frac{C}{T-t}
\end{equation}
for some uniform constant $C<\infty$ and every $t\in[0,T)$.

A fundamental problem in the study of finite-time singularities is to
understand the geometry arising under parabolic rescaling. By the work
of Naber and Enders--Müller--Topping, suitable blow-ups based at a
Type~I singular point subconverge to a complete nonflat Ricci shrinker with bounded curvature \cite{Naber,EMT}. In the
Kähler setting, the limit is naturally a Kähler Ricci shrinker.

Set $\alpha_T=[\omega_0]-Tc_1(M)$. A finite-time singularity is called volume-collapsing if
$\alpha_T^n=0$, and the flow becomes extinct at time $T$ if
$\alpha_T=0$. Such singularities are closely related to Fano
fibrations. Song characterized finite-time extinction in the
polarized setting \cite{Song2014FiniteTimeExtinction}, and
Tosatti--Zhang proved that a compact Kähler threefold with a
finite-time collapsing Kähler--Ricci flow admits a Fano fibration
\cite{TosattiZhang2018FiniteTimeCollapsing}. For projective bundles,
Song--Székelyhidi--Weinkove studied finite-time fiber collapse and
Gromov--Hausdorff convergence to the base \cite{SSW}. Related results
on Fano bundles and collapsing rates can be found in
\cite{FuZhang2017FanoBundles,Shen2021FanoBundles,
Fong2011CollapsingRate,ZhangZhang2023CollapsingRate}.

Projective bundle provide a natural setting for
finite-time collapse without extinction. Under some symmetry assumptions,
Song--Weinkove described the finite-time behavior of the
Kähler--Ricci flow on Hirzebruch surfaces
\cite{SongWeinkove2011Hirzebruch}. Fong proved that a class of
$U(1)$-invariant flows on projective bundles develops Type~I
singularities with blow-up models of
$\mathbb C^n\times\mathbb P^1$ type
\cite{Fong2014ProjectiveBundles}. Fong--Tran established Type~I singularities and classified their blow-up models for Kähler--Ricci flows under circle-bundle ansatz \cite{FongTran2026,Fong26}. For related results on collapsing Hirzebruch
surfaces, see \cite{LJT}.

Building on Bamler's compactness theory for metric flows
\cite{Bam20a,Bam23,Bam20b}, Jian--Song--Tian obtained Type~I diameter
and scalar curvature estimates for almost every fiber of a
finite-time collapsing Kähler--Ricci flow on a Fano fiber bundle
\cite{JianSongTian2023FiniteTimeSingularities}. For ruled surfaces, they proved that
Type~I rescalings based at Ricci vertices subconverge smoothly to an
ancient Kähler--Ricci flow on
$\mathbb P^{1}\times\mathbb C$. Related estimates and
regularity results can be found in
\cite{HJST}. Independently, Jian-Song proved that any finite-time collapsing Kähler–Ricci flow on \(\mathbb P^1\)-bundle over projective manifold is of Type I and every tangent flow is the shrinking cylinder \(\mathbb C^m\times\mathbb P^1\) \cite{JS26}.

A folklore conjecture asserts that finite-time singularities of the
K\"ahler--Ricci flow on compact K\"ahler surfaces are always of Type~I.

\begin{conjecture}
Finite time singularities of Kähler Ricci flow on compact Kähler surfaces are all type I singularities.
\end{conjecture}

This conjecture is false in higher dimension \cite{LTZ24,MT23}. 
For surfaces, the volume-noncollapsing case was established by Conlon--Hallgren--Ma,
and Cifarelli--Conlon--Deruelle identified
the noncompact blow-up model with the FIK shrinker
\cite{CCD,CHM25}. In the volume collapsing case, if the flow
becomes extinct, then the surface is del Pezzo and the initial K\"ahler
class is a positive multiple of $c_1(M)$. This case is of Type~I, and
its blow-up models are compact K\"ahler--Ricci shrinkers
\cite{Tia90,Koi90,WZ04,ST08,TZ07,BM87,TZ00}. In the case of volume collapse with no extinction, $M$ is birational to a ruled surface \cite{SWL}, Bamler--Cifarelli--Conlon--Deruelle constructed a toric Type~I flow on
$\operatorname{Bl}_p(\PP^1\times\PP^1)$ whose singular-fibre blow-up
model is the BCCD shrinker on
$\operatorname{Bl}_p(\CC\times\PP^1)$ \cite{BCCD}, but it is still open whether all finite‑time singularities are of Type I in volume collapse case.

Let $M_0$ be a ruled surface i.e. the $\mathbb{P}^1$-bundle over a compact Riemann surface $\Sigma$ with holomorphic submersion $p\colon M_0\to\Sigma$.
 We establish the following results:

\begin{theorem}
Let $(M_0,\omega(t))_{t\in[0,T)}$ be a finite time Kähler Ricci flow on ruled surface $M_0$. If $[\omega_0]-Tc_1(M_0)=[p^*\omega_{\Sigma}]$, then $\omega(t)$ develops a finite time Type I singularity at $t=T$.
\end{theorem}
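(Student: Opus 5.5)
We aim to bound $|\Rm(\omega(t))|\le C/(T-t)$ on all of $M_0$, by combining the Type~I estimates available on generic fibers with a blow-up argument that excludes Type~II behavior.

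\textbf{Step 1: parabolic Schwarz lemma and metric equivalence.} First we record the standard estimates for this collapsing setting. The Kähler class satisfies $[\omega(t)]=[p^*\omega_\Sigma]+(T-t)(-c_1(M_0))$, up to the normalization by $[\omega_0]$. The parabolic Schwarz lemma gives $p^*\omega_\Sigma\le C\omega(t)$. Song's scalar curvature and diameter estimates, as quoted from \cite{JianSongTian2023FiniteTimeSingularities}, give $|R|\le C/(T-t)$ and fiber diameter $\le C\sqrt{T-t}$. Comparing with the explicit reference form $\hat\omega_t=p^*\omega_\Sigma+(T-t)\hat\omega_F$, the potential estimates then yield the uniform equivalence
\[
C^{-1}\hat\omega_t\le\omega(t)\le C\hat\omega_t .
\]
The upper bound comes from a Chern--Lu type computation on $\operatorname{tr}_{\omega}\hat\omega_t$. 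The lower bound follows from the volume form estimate $\omega^2\sim (T-t)\,\Omega$ together with the upper bound. This equivalence holds on all of $M_0$, not only on a.e.\ fiber, because $p$ is a submersion and there are no singular fibers.

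\textbf{Step 2: Type I rescalings have uniformly controlled geometry.} Fix any sequence $x_i\in M_0$, $t_i\to T$, and rescale by $g_i(s)=(T-t_i)^{-1}g(t_i+(T-t_i)s)$. By Step~1, each $g_i$ is uniformly equivalent, on parabolic neighborhoods of fixed size, to the product $\mathbb C\times\mathbb P^1$ with a standard Fubini--Study fiber of area comparable to $1$. The base direction becomes flat $\mathbb C$ because $\omega_\Sigma/(T-t_i)$ blows up uniformly. Uniform equivalence to a fixed smooth product metric, together with the Kähler--Ricci flow equation, lets us apply the local higher-order estimates of Sherman--Weinkove type (Calabi $C^3$ estimate, then curvature) for Kähler--Ricci flow with metric bounds. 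These give $|\Rm(g_i)|\le C$ on $[-1/2,0]$ uniformly, which is precisely $|\Rm(\omega(t))|\le C/(T-t)$.

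\textbf{Main obstacle.} The delicate point is upgrading the equivalence $C^{-1}\hat\omega_t\le\omega\le C\hat\omega_t$ to an estimate that is uniform in the rescaled picture. The Chern--Lu argument must be run with the time-dependent reference $\hat\omega_t$, whose fiber part degenerates, so the curvature terms of $\hat\omega_t$ scale like $(T-t)^{-1}$. I would first try a maximum principle on
\[
\log\operatorname{tr}_{\omega}\hat\omega_t - A\,\frac{\varphi-\dot\varphi\,(T-t)}{T-t}
\]
for a suitably normalized potential $\varphi$, exploiting the bound $|\dot\varphi|\le C$ and the $C^0$ bound $|\varphi|\le C(T-t)$; the latter should follow from comparing with the explicit collapsing solution of the form $p^*\omega_\Sigma+(T-t)\omega_{FS}$. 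If that $C^0$ bound is unavailable directly, the fallback is Bamler's compactness: a Type~II blow-up limit would be an ancient non-flat flow that is uniformly equivalent to $\mathbb C\times\mathbb P^1$, and Liouville-type arguments then force it to be the shrinking cylinder, which has bounded curvature. This contradicts the Type~II normalization.
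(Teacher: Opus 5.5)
Your proposal has a genuine gap, and it sits at the step you yourself flag as the main obstacle. Step~1 is not a standard estimate; it is essentially as strong as the theorem.

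What is actually available is Lemma~\ref{lem:SSW}: $c\,p^*\omega_\Sigma\le\omega(t)\le C\omega_0$ and $\diam_{g(t)}F_q\le C(T-t)^{1/3}$. The Jian--Song--Tian Type~I scalar curvature and diameter bounds you quote are only established for almost every fiber. In any case, neither a diameter bound nor a scalar curvature bound gives pointwise control of $\omega(t)$.

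The equivalence $C^{-1}\hat\omega_t\le\omega(t)\le C\hat\omega_t$ contains the sharp fiberwise bounds $c(T-t)\,\omega_F\le\omega(t)|_{F_q}\le C(T-t)\,\omega_F$. That is the optimal collapse rate, which the paper only derives \emph{after} Type~I is known (Corollary~\ref{rate}).

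Your Chern--Lu computation does not produce this. The reference $\hat\omega_t$ has bisectional curvature of order $(T-t)^{-1}$ in the fiber directions. Absorbing that forces a term $A\varphi/(T-t)$, and closing the maximum principle then requires $|\varphi|\le C(T-t)$. Nothing gives this bound:
\begin{itemize}
\item The form $p^*\omega_\Sigma+(T-t)\omega_{FS}$ does not solve the flow on a general ruled surface; the curvature of $\omega_\Sigma$ and the twisting of the bundle enter $\Ric$.
\item Even against a genuine model solution, the comparison principle bounds the difference of potentials only by its initial oscillation, never by $C(T-t)$, since potentials are defined only up to constants.
\end{itemize}
Two smaller slips: $[\omega(t)]=[p^*\omega_\Sigma]+(T-t)c_1(M_0)$, and a bound on $\operatorname{tr}_{\omega}\hat\omega_t$ is the lower bound $\omega\ge C^{-1}\hat\omega_t$, not the upper one.

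If Step~1 were available, Step~2 would finish the proof. In rescaled base and fiber coordinates the references converge to a product metric on $\mathbb{C}\times\mathbb{P}^1$, and local derivative estimates for the K\"ahler--Ricci flow apply. But Step~1 is the whole difficulty.

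The fallback does not repair this. It still presupposes uniform equivalence to $\mathbb{C}\times\mathbb{P}^1$, and the ``Liouville-type arguments'' are never specified. More importantly, it skips the real issue in any compactness argument. The $\mathbb{F}$-limit of the flows rescaled by $\tau_i=T-t_i$ is a priori smooth only on its regular part. A bounded-curvature limit contradicts $\tau_i|\Rm|(x_i,t_i)\to\infty$ only if $x_i$ lies in the image of a fixed compact subset of the region of smooth convergence. Under a Type~II normalization $|\Rm|(x_i)=1$ there is no contradiction at all.

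The paper supplies exactly the ingredients you are missing:
\begin{enumerate}
\item[(i)] The moving-basepoint entropy lemma (Lemma~\ref{lem:entropy}). $\mathcal N_{x,T}(\tau)$ is constant along each fiber, by Proposition~\ref{bamen} and the shrinking fiber diameter. It is Lipschitz in $q$, and it increases to $\mathcal N_{\mathrm{cyl}}$ uniformly on compact sets by Dini. So the limit based at the varying points $(x_i,T)$ has constant entropy and is an orbifold shrinker.
\item[(ii)] The rescaled base coordinate $u_i=\tau_i^{-1/2}(z\circ p-z(q_i))\circ\psi_i$. Its gradient is bounded above and below by the two-sided Schwarz estimate. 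Its limit removes the orbifold points and, via the splitting lemma, gives $\mathbb{P}^1\times\mathbb{C}$.
\item[(iii)] Capture of the basepoint. The $H_4$-center estimates keep $p(\psi_i(K))$ within $C\sqrt{\tau_i}$ of $q_i$. The implicit function theorem then produces a compact graph $Z_i\subset u_i^{-1}(0)$ over $\mathbb{P}^1\times\{0\}$ with $\psi_i(Z_i)=F_{q_i}\ni x_i$. Hence $\tau_i|\Rm|(x_i,t_i)$ is bounded by smooth convergence on a fixed compact set.
\end{enumerate}
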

The same argument applies to $\mathbb P^1$-fibrations. Let $p:M\to\Sigma$ be a holomorphic fibration from a compact Kähler surface onto a compact Riemann surface $\Sigma$, with general fiber biholomorphic to $\mathbb P^1$. Let $\Delta\subset\Sigma$ denote the set of points whose fibers are not biholomorphic to $\mathbb P^1$, we confirm Conjecture 1.1 in the case of regular fibers:
\begin{theorem}\label{main}[Main Theorem]
    Let $(M,\omega(t))_{t\in[0,T)}$ be a finite time Kähler Ricci flow, suppose $M$ admits a holomorphic $\mathbb{P}^1$-fibration. If $[\omega_0]-Tc_1(M)=[p^*\omega_{\Sigma}]$, then 
    \begin{align*}
        \sup\nolimits_{p^{-1}(\mathcal{K})}
 |\Rm_{\omega(t)}|_{\omega(t)}
 \leq \frac{C_\mathcal{K}}{(T-t)}
    \end{align*}
    for any compact set $\mathcal{K}\Subset\Sigma\setminus\Delta$.
\end{theorem}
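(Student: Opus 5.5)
We argue by contradiction with a point-picking blow-up, and rule out each possible Type~II limit using the collapsing structure of the fibration. Throughout we use the known collapsing estimates for $[\omega_0]-Tc_1(M)=[p^*\omega_\Sigma]$:
\begin{itemize}
\item the scalar curvature bound $|R|\le C/(T-t)$ (Song--Tian type, or Jian--Song--Tian via Bamler's theory);
\item the fiber-area identity $\Area_{\omega(t)}(p^{-1}(z))=2(T-t)$, which holds because the class restricted to a fiber is $(T-t)c_1(\mathbb P^1)$;
\item the equivalence $\omega(t)\ge c\,p^*\omega_\Sigma$ and the bound $\omega(t)\le C p^*\omega_\Sigma + C(T-t)\hat\omega$ on $p^{-1}(\mathcal K)$, obtained from the Schwarz lemma and a parabolic Monge--Amp\`ere reduction, with $\hat\omega$ a fixed reference form.
\end{itemize}
Shrinking $\mathcal K$ slightly inside $\Sigma\setminus\Delta$, we may work on $p^{-1}(U)$ with $U\Subset\Sigma\setminus\Delta$. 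Over $U$ the map $p$ is a smooth $\mathbb P^1$-bundle and is locally trivial, so the estimates can be localized.

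\textbf{Step 1: Type~I bounds on the metric.} We first show that $\tilde\omega(s)=(T-t)^{-1}\omega(t)$ is uniformly equivalent on $p^{-1}(U)$ to the model product metric $(T-t)^{-1}p^*\omega_\Sigma+\omega_{FS,\text{fiber}}$. The idea is to use a semi-flat reference form $\omega_{sf}(t)=p^*\omega_\Sigma+(T-t)\omega_{FS}$, where $\omega_{FS}$ is a closed form restricting to Fubini--Study on the fibers. This form is in the right class up to $\partial\bar\partial$, so the flow reduces to a parabolic complex Monge--Amp\`ere equation for a potential $\varphi$.

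We then derive a $C^0$ bound on $\varphi/(T-t)$ and on $\dot\varphi$, followed by a trace estimate for $\operatorname{tr}_{\omega_{sf}}\omega$ and $\operatorname{tr}_{\omega}\omega_{sf}$. The trace estimate uses the Chern--Lu inequality and the fact that the fibers are $\mathbb P^1$, which carry positive curvature.

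\textbf{Step 2: higher-order bounds and the contradiction.} After parabolic rescaling by $(T-t)^{-1}$ around any point of $p^{-1}(\mathcal K)$, the fiber directions have unit size and the base directions become flat: $(T-t)^{-1}$ times a smooth base metric, read in rescaled coordinates, converges to $\mathbb C$. The rescaled flows therefore have uniformly equivalent metrics over unit parabolic neighborhoods.

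Higher-order estimates then follow from local Calabi $C^3$ / Evans--Krylov--Schauder estimates for the rescaled parabolic Monge--Amp\`ere equation in local holomorphic coordinates adapted to the fibration. These coordinates have bounded geometry on the rescaled scale because the bundle is locally trivial over $U$. The result is $|\Rm(\tilde\omega)|\le C$, which is exactly the claimed bound $|\Rm_{\omega(t)}|\le C_\mathcal K/(T-t)$.

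As an alternative for this step, one can run the contradiction argument directly. Pick points with $|\Rm|(T-t)\to\infty$ and rescale by $|\Rm|$. Step~1 forces the limit to be a flat-in-base, fiber-collapsed-to-zero-curvature object, which contradicts the limit having $|\Rm|=1$.

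\textbf{Main obstacle.} The key difficulty is Step~1's two-sided metric equivalence uniformly up to $T$, and specifically the lower bound $\omega\ge c(T-t)\omega_{FS}$ in the fiber directions. The upper bound and the base lower bound follow from Schwarz-lemma arguments. The fiber lower bound requires controlling $\operatorname{tr}_\omega\omega_{sf}$.

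My first attempt is the maximum principle for $\log\operatorname{tr}_{\omega}\omega_{sf}-A\varphi/(T-t)+B\dot\varphi$. If this fails, I would use the fiberwise estimate that each fiber has area $2(T-t)$ and curvature bounded below via $R\ge -C$. This gives a fiberwise diameter bound $\lesssim\sqrt{T-t}$ (as in Jian--Song--Tian, but for every regular fiber rather than almost every one). Combined with the volume form bound $\omega^2\approx(T-t)\,\Omega$, which comes from $\dot\varphi$ being bounded, this yields the equivalence.
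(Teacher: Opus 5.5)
There is a genuine gap, and it is in Step~1. That step carries the entire theorem, and part of it is assumed rather than proved.

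\textbf{The assumed upper bound is not known.} You list $\omega(t)\le C\,p^*\omega_\Sigma+C(T-t)\hat\omega$ on $p^{-1}(\mathcal K)$ among the known inputs, but it does not follow from the Schwarz lemma and a Monge--Amp\`ere reduction. Restricted to a fiber it gives $\diam(F_q,g(t)|_{F_q})\le C\sqrt{T-t}$. That is exactly the optimal collapse rate, which the paper only obtains afterwards (Corollary~1.3) as a consequence of the Type~I bound. What is actually available is Song--Sz\'ekelyhidi--Weinkove (Lemma~2.2 of the paper): $c\,p^*\omega_\Sigma\le\omega(t)\le C\omega_0$, with fiber diameter $\le C(T-t)^{1/3}$.

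\textbf{Your ``main obstacle'' is misplaced.} The assumed upper bound, together with the volume-form bound $\omega^2\approx(T-t)\Omega$ you invoke, already gives the full two-sided equivalence with $\omega_{sf}$. The eigenvalues of $\omega$ relative to $\omega_{sf}$ are bounded above and have product bounded below. So the fiber lower bound is not where the difficulty lies; it sits in the unproved inputs.

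\textbf{The maximum-principle quantity does not supply the estimate.} In the Chern--Lu inequality for $\operatorname{tr}_\omega\omega_{sf}$, the bisectional curvature of the target enters with the unfavorable sign. For $\omega_{sf}=p^*\omega_\Sigma+(T-t)\omega_{FS}$ it is of order $(T-t)^{-1}$ in fiber directions, so the positivity of the $\PP^1$ fibers hurts rather than helps. Absorbing that term with $-A\varphi/(T-t)$ forces you to know $|\varphi|\le C(T-t)$ and $|\dot\varphi|\le C$. These are sharp fiber-scale $C^0$ estimates; you assert them without argument, and the known methods for this flow stop at the $(T-t)^{1/3}$ rate.

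\textbf{The fallback fails too.} For a complex curve, the Gauss equation bounds its Gaussian curvature only from \emph{above} by the ambient curvature. So $R\ge -C$ gives no lower curvature bound on $F_q$. Even with one, small area plus a curvature lower bound does not bound the diameter of a sphere (think of long thin capsules); what is missing is a lower bound on the area of small intrinsic balls. Jian--Song--Tian's diameter bound for almost every fiber comes from Bamler's theory, and passing to every fiber is precisely the nontrivial point. Your Step~2 (local derivative estimates on unit parabolic neighborhoods) would be fine given Step~1, but without Step~1 the argument has nothing to run on.

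\textbf{How the paper avoids this.} It never needs sharp fiber estimates.
\begin{enumerate}
\item The non-sharp bounds $c\,p^*\omega_\Sigma\le\omega\le C\omega_0$ already make $u_i=\tau_i^{-1/2}(z\circ p-z(b_i))\circ\psi_i$ have $|du_i|$ bounded above and below at the blow-up scale.
\item In Bamler's $\mathbb F$-limit this removes orbifold points and splits off a $\CC$ factor, so tangent flows are $\PP^1\times\CC$.
\item Fiber diameters tending to zero make $\mathcal N_{x,T}(\tau)$ a continuous function of $p(x)$. Dini's theorem upgrades this to uniform convergence to $\mathcal N_{\mathrm{cyl}}$, so blow-ups at $(x_i,T)$ at scale $T-t_i$ are again cylinders.
\item The implicit function theorem places the whole fiber $F_{q_i}\ni x_i$ in the region of smooth convergence, which bounds $(T-t_i)|\Rm|(x_i)$.
\end{enumerate}
The optimal fiber geometry is an output of that argument, not an input.
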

If every fiber of $p$ is biholomorphic to $\mathbb P^1$, then $\Delta=\varnothing$ and Theorem 1.2 reduces to Theorem 1.1. In the subsequent discussion, we will prove Theorem 1.2 directly.

We also establish the following optimal collapse rate:
\begin{corollary}\label{rate}
Under the same assumptions in Theorem \ref{main}, for every compact set $\mathcal{K}\Subset\Sigma\setminus\Delta$, there exist
nonzero positive constants $c_\mathcal{K}, C_\mathcal{K}$ such that
\[
 c_\mathcal{K}\sqrt{T-t}
 \leq
 \operatorname{diam}
 \bigl(F_q,g(t)|_{F_q}\bigr)
 \leq
 C_\mathcal{K}\sqrt{T-t}
\]
for all $t\in[0,T)$ and $q\in \mathcal{K}$. In particular, if $M=\PP(E)\to\Sigma$ is a ruled
surface, then the estimate holds uniformly
for all $q\in\Sigma$.
\end{corollary}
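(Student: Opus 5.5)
The plan is to get both bounds from Theorem \ref{main} together with the cohomological data of the flow. The fiber area is fixed by cohomology. Since $[\omega(t)]=[\omega_0]-tc_1(M)=[p^*\omega_\Sigma]+(T-t)c_1(M)$ and $c_1(M)\cdot[F]=2$ for a $\mathbb P^1$ fiber, adjunction gives, for every smooth fiber $F_q$,
\[
\Area(F_q,\omega(t)|_{F_q})=\int_{F_q}\omega(t)=2(T-t).
\]
The task is therefore to show that the restricted metric $\omega(t)|_{F_q}$ is uniformly comparable, after scaling, to a round metric of area $2(T-t)$ for $q$ in a compact set $\mathcal K\Subset\Sigma\setminus\Delta$.

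The lower bound is elementary once we have the area identity. Fix a compact $\mathcal K'$ with $\mathcal K\Subset \mathcal K'\Subset\Sigma\setminus\Delta$. On $p^{-1}(\mathcal K')$, Theorem \ref{main} gives $|\Rm|\le C/(T-t)$. Because $F_q$ is a complex curve, it is a minimal surface. By the Gauss equation its intrinsic Gaussian curvature satisfies $K_{F_q}\le \sec_M\le C/(T-t)$. A closed surface with $K\le \kappa$ and diameter $d<\pi/\sqrt\kappa$ has area at most $C d^2$: take a point $x$, note $F_q\subset B(x,d)$, and apply Günther--Bishop comparison below the conjugate radius. Either $d\ge c\sqrt{T-t}$ holds directly, or $d$ is small compared to $\pi\sqrt{(T-t)/C}$, in which case
\[
2(T-t)=\Area(F_q)\le C' d^2 .
\]
Both cases give $d\ge c_{\mathcal K}\sqrt{T-t}$.

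The upper bound is the main obstacle, since we need a lower bound on curvature or on the injectivity radius of the fiber. I would first try the Gauss--Bonnet route. For the restricted metric,
\[
\int_{F_q}K_{F_q}\,dA=4\pi .
\]
The Gauss equation for a complex curve gives $K_{F_q}=\text{(holomorphic sectional curvature of }M\text{ along }TF_q)-|\mathrm{II}|^2$. Hence $|K_{F_q}|\le C/(T-t)+|\mathrm{II}|^2$, and we need control of the second fundamental form of fibers. I would bound $|\mathrm{II}|^2\lesssim (T-t)^{-1}$ by a parabolic argument in the spirit of Song--Weinkove/Fong. Write $\mathrm{II}$ in terms of $\nabla$ of the horizontal--vertical splitting: consider $\mathrm{tr}_{\omega}(p^*\omega_\Sigma)$, which is bounded by the Schwarz lemma and gives bounded horizontal geometry. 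Then use $|\nabla \mathrm{tr}_\omega p^*\omega_\Sigma|^2$ together with the curvature bound $|\Rm|\le C/(T-t)$ and Shi-type derivative estimates in the Type I scale on $p^{-1}(\mathcal K')$. Together these give $|\nabla\Rm|\le C(T-t)^{-3/2}$ and a bound on the fiber's second fundamental form of order $(T-t)^{-1/2}$.

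Once $|K_{F_q}|\le C/(T-t)$ is established, rescale $\tilde g=(T-t)^{-1}g(t)|_{F_q}$. This gives a metric on $S^2$ with area $2$ and $|K|\le C$. A Klingenberg-type argument on an orientable even-dimensional surface with $K\le C$ gives $\mathrm{inj}\ge\pi/\sqrt C$. Since the area is fixed, this bounds $\diam$ via a ball-packing count using $K\ge -C$ and Bishop--Gromov. Scaling back yields $\diam\le C_{\mathcal K}\sqrt{T-t}$.

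An alternative I would keep in reserve is a blow-up contradiction. If $\diam(F_{q_i},t_i)/\sqrt{T-t_i}\to\infty$, then Type I rescalings based on $F_{q_i}$ converge (by Theorem \ref{main} and Perelman's no-local-collapsing at the Type I scale) to a complete Kähler flow containing a holomorphic fiber of area $2$ and infinite diameter. That would contradict properness of the limiting fibration over $\CC$ obtained from the bounded $\mathrm{tr}_\omega p^*\omega_\Sigma$.

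For a ruled surface $\Delta=\varnothing$, so we take $\mathcal K=\Sigma$ and the bounds hold uniformly in $q$.
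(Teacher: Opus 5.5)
\textbf{There are genuine gaps.} Your area identity $\Area(F_q,g(t)|_{F_q})=2(T-t)$ and the inequality $K_{F_q}\le\sec_M$ are fine. Both diameter bounds, however, rest on intrinsic comparison claims that are wrong.

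For the lower bound, G\"unther--Bishop under $K\le\kappa$ bounds the area of (pulled-back) balls from \emph{below}. An upper bound $\Area\le Cd^2$ needs a \emph{lower} curvature bound. As you state it for closed surfaces, the claim is false: large-genus hyperbolic surfaces rescaled to diameter $1$ have $K<0\le\kappa$ and unbounded area.

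For the upper bound, Klingenberg's estimate $\operatorname{inj}\ge\pi/\sqrt C$ requires $0<K\le C$. With only $|K|\le C$ and fixed area, a $2$-sphere can have arbitrarily small injectivity radius and arbitrarily large diameter. For example, join two round caps of curvature $1$ in a $C^1$ way to the neck $ds^2+\rho^2\cosh^2 s\,d\theta^2$, $|s|\le S$, with $\rho^2\cosh 2S=1$. After smoothing, $|K|\le 2$ and the area stays bounded, yet $\operatorname{inj}\le\pi\rho$ and $\diam\ge 2S\approx 2\log(1/\rho)$. So the intrinsic curvature of $F_q$ together with its area cannot give the upper bound; extrinsic input is indispensable.

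\textbf{Missing ingredient.} What is needed is a bound $|A|\le C(T-t)^{-1/2}$ on the second fundamental form of $F_q$, used extrinsically, and your sketch does not produce it. Take a local holomorphic defining function $u$ of $F_q$ and a unitary frame with $e_1\in T^{1,0}F_q$ and $e_2\perp e_1$. Then $A(e_1,e_1)$ is governed by $u_{11}/u_2$. On the other hand, $\nabla_k|\partial u|^2=u_{k2}\overline{u_2}$ involves only $u_{12}$ and $u_{22}$. Hence gradient bounds on $\operatorname{tr}_\omega p^*\omega_\Sigma$, even combined with Shi's bound on $|\nabla\Rm|$, do not control $A$.

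\textbf{The paper's route.} The paper works elliptically at the Type I scale, in five steps.
\begin{enumerate}
\item The function $u_{q,t}=(T-t)^{-1/2}(z\circ p-z(q))$ is holomorphic with $c_{\mathcal K}\le|du_{q,t}|_{\widehat g_t}\le C_{\mathcal K}$, by the two-sided Schwarz estimates of Lemma~\ref{lem:SSW}.
\item The Type I bound plus noncollapsing give uniformly bounded geometry for $\widehat g_t=(T-t)^{-1}g(t)$ near $p^{-1}(\mathcal K)$. Interior elliptic estimates then give $|\widehat\nabla^2u_{q,t}|\le C$ on $F_q=u_{q,t}^{-1}(0)$.
\item The identity $du(A(V,W))=-\widehat\nabla^2u(V,W)$ yields $|A|_{\widehat g_t}\le C$.
\item The Gauss equation now gives $K_{F_q}\ge-C$, and Bishop--Gromov gives $2\le\Area_{-C}(D)$. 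This is the lower bound.
\item Bounded $A$ inside bounded ambient geometry makes $F_q$ locally a graph of definite size. So intrinsic $r_{\mathcal K}$-balls have area $\ge v_{\mathcal K}$, and packing along a minimizing geodesic against total area $2$ gives the upper bound.
\end{enumerate}

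Your reserve blow-up argument can be made to work along the same lines; it is the implicit-function graph argument from the proof of Theorem~\ref{main}. However, you would still have to show that the limiting zero set $u^{-1}(0)$ is compact and that $F_{q_i}$ is globally a graph over it. That is where the real content lies, and your sketch does not address it.
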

Let $p\colon M=\mathbb P(E)\to B$ be the projective bundle associated to a holomorphic vector bundle $E\to B$ over a smooth projective variety $B$.
 Under the Type~I curvature bound, the same
argument yields the following.
\begin{theorem}
Let $(M,\omega(t))_{t\in[0,T)}$ be a finite-time Type~I
K\"ahler--Ricci flow on a projective bundle. Suppose that $[\omega_0]-Tc_1(M)=[p^*\omega_B]$. Then, for every sequence $\tau_i\searrow 0$ and any $x\in M$, the
rescaled K\"ahler--Ricci flows $(M,g_i(t),x)$, defined by $g_i(t):=\tau_i^{-1} g\left(T+\tau_i t\right)$ on $t\in[-\tau_i^{-1}T,0)$ subconverge in the pointed $C^\infty$ Cheeger--Gromov topology to the
standard product shrinker $\left(
\mathbb{P}^s\times\mathbb{C}^m,\,
g_{\mathbb{P}^s}\times g_E
\right)$.
\end{theorem}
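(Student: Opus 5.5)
Since the flow is assumed Type~I, I would use the theory of Naber and Enders--Müller--Topping: for any $x\in M$ and $\tau_i\searrow0$, the rescaled flows $g_i(t)=\tau_i^{-1}g(T+\tau_i t)$ have uniformly bounded curvature on compact time intervals of $(-\infty,0)$. I also need a uniform lower bound on the injectivity radius at the base point. This will come from the non-collapsing of the rescaled flows, via Perelman's $\kappa$-noncollapsing, which is scale invariant on finite time intervals. Hamilton's compactness theorem then gives subconvergence, in the pointed $C^\infty$ Cheeger--Gromov sense, to a complete ancient Kähler--Ricci flow $(M_\infty,g_\infty(t),x_\infty)$ with bounded curvature on compact time intervals. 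By \cite{Naber,EMT}, this limit is the flow of a gradient shrinking Kähler--Ricci soliton. It remains to identify the limit. The work is to show that (i) the base directions become flat, (ii) the fiber directions converge to the Fubini--Study metric on $\PP^s$, and (iii) the limit splits.

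\textbf{Step 1 (cohomological/fiber scale).} The hypothesis $[\omega_0]-Tc_1(M)=[p^*\omega_B]$ gives
\[
[\omega(t)]=[p^*\omega_B]+(T-t)c_1(M).
\]
Restricted to a fiber $F\cong\PP^s$, this is $(T-t)c_1(\PP^s)$. After rescaling by $\tau_i^{-1}$ at time $T+\tau_it$, the fiber class is $(-t)c_1(\PP^s)$, exactly the class of the shrinking Fubini--Study flow. Meanwhile the base class $\tau_i^{-1}[p^*\omega_B]$ blows up, so base directions become flat at the parabolic scale. Using the Type~I curvature bound together with Hamilton's distance-distortion estimates, the rescaled diameters of fibers are bounded uniformly: at time $-1$ they are $\approx$ const. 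This yields the upper bound of Corollary~\ref{rate} in this setting, and the lower bound follows from the fiber volume $\int_F\omega^s=(T-t)^s c_1(\PP^s)^s$ together with the curvature bound (Bishop--Gromov on $F$ with the second fundamental form controlled by $|\Rm|$ via Gauss's equation and holomorphicity of fibers).

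\textbf{Step 2 (splitting).} I would show that the pullback $\tau_i^{-1}p^*\omega_B$, a nonnegative closed $(1,1)$-form, converges to a parallel form on the limit. By a Schwarz-lemma computation along the flow (as in Song--Tian), $\operatorname{tr}_{\omega(t)}p^*\omega_B\le C$; equivalently the rescaled $\operatorname{tr}_{\omega_i}(\tau_i^{-1}p^*\omega_B)\le C\tau_i^{-1}$. Better, I would use the parabolic Schwarz lemma estimate
\[
(\partial_t-\Delta)\operatorname{tr}_\omega p^*\omega_B\le C(\operatorname{tr}_\omega p^*\omega_B)^2-|\nabla\cdot|^2/\operatorname{tr},
\]
whose gradient term, after integration against the conjugate heat kernel (Bamler/Perelman reduced volume), shows that the rescaled $\int|\nabla(\tau_i^{-1}p^*\omega_B)|^2$ tends to $0$ on compact sets. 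Then the limit carries a nonzero parallel $(1,1)$-form $\eta$ of rank $m$ whose kernel is tangent to the limit of the fibers. By de Rham decomposition the universal cover splits as $N\times\CC^m$ with the flat factor, and the shrinker structure forces the Euclidean factor to be the Gaussian shrinker.

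\textbf{Step 3 (identifying $N$).} The factor $N$ is a compact Kähler shrinker that is a smooth limit of rescaled fibers $\PP^s$ in the class $c_1$, hence biholomorphic to $\PP^s$ (the complex structure is fixed on each fiber and fibers vary holomorphically). By uniqueness of Kähler--Ricci solitons on $\PP^s$ (Tian--Zhu; the Futaki invariant vanishes), it is Kähler--Einstein Fubini--Study. Simple connectivity of $\PP^s$ and the local product of fibers with a neighborhood in the base give the splitting on $M_\infty$ itself.

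\textbf{Main obstacle.} The main difficulty is Step 2: upgrading the weak convergence of $\tau_i^{-1}p^*\omega_B$ to a parallel limit form. Here I would first try the monotonicity/$\mathcal{W}$-entropy route. The shrinker equation makes $\operatorname{Ric}+\nabla^2 f=\frac{1}{2\tau}g$ on the limit, and the horizontal components of the rescaled $\operatorname{Ric}$ are controlled by $\tau_i\cdot$(bounded). So the limit Ricci curvature vanishes in the base directions. This, together with the holomorphic submersion structure, gives a Ricci-flat, and hence by the soliton identity flat, horizontal distribution.
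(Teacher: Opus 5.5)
Your overall plan matches the paper's: Type~I compactness to a shrinker, split off $\CC^m$, then identify the compact factor. But Step~2, which you rightly call the crux, has a genuine gap as written.

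There is first a scaling slip: $\operatorname{tr}_{\omega_i}(\tau_i^{-1}p^*\omega_B)=\operatorname{tr}_{\omega}p^*\omega_B\le C$ is scale invariant, not $O(\tau_i^{-1})$. More seriously, the term $-|\nabla\operatorname{tr}|^2/\operatorname{tr}$ that you keep controls only the derivative of the trace, not of the form. To get a parallel limit you would need the full Chern--Lu term, $(\partial_t-\Delta)\operatorname{tr}_\omega p^*\omega_B\le -|\nabla\partial p|^2+C(\operatorname{tr}_\omega p^*\omega_B)^2$. You would combine it with the fact that $\int\operatorname{tr}_\omega p^*\omega_B\,d\nu_t-Ct$ is nonincreasing and bounded, so the scale-invariant space-time integral of $|\nabla\partial p|^2$ over $[T-A\tau_i,T-\tau_i/A]$ tends to $0$.

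Even granting that, nothing in your proposal shows that the limit $\eta_\infty$ of $\tau_i^{-1}p^*\omega_B$ is nonzero, let alone of rank $m$. An upper trace bound only gives $\eta_\infty\le C\omega_\infty$, which is compatible with $\eta_\infty=0$. Nondegeneracy needs the other half of the two-sided Schwarz estimate, $\omega(t)\le C\omega_0$ (Song--Sz\'ekelyhidi--Weinkove), which you never use.

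Your fallback argument does not work either. Type~I only gives horizontal Ricci curvature of size $O(\tau_i^{-1})$ before rescaling, hence $O(1)$ after, not $O(\tau_i)$. And vanishing Ricci curvature in horizontal directions would not by itself make that distribution parallel.

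What the paper does instead, and what your plan is missing, is to work with functions rather than with the form. By $c\,p^*\omega_B\le\omega(t)\le C\omega_0$, the rescaled base coordinates $u_i^\alpha=\tau_i^{-1/2}(z^\alpha\circ p-z^\alpha(b_i))\circ\psi_i$ are holomorphic with two-sided gradient bounds. So they converge smoothly to a holomorphic $U\colon X\to\CC^m$ of linear growth with $c_0|\xi|^2\le|\sum_\alpha\xi_\alpha du^\alpha|^2\le C_0|\xi|^2$. This also supplies the smooth convergence of $\tau_i^{-1}p^*\omega_B$ that you take for granted. A static argument on the shrinker then finishes: the real parts $v^\alpha$ are $\Delta_f$-eigenfunctions with eigenvalue $1$, hence $\nabla^2 v^\alpha=0$. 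This gives global parallel fields and a global splitting $X=Y\times\CC^m$, with no flow monotonicity and no passage to the universal cover.

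For Step~3, the paper gets compactness of $Y$ from $\operatorname{Vol}(Y)\le V_s$ and the infinite volume of noncompact shrinkers, then $\psi_i(Y_i)=F$ by the implicit function theorem. Your fiber-diameter route can work, but it needs two corrections:
\begin{itemize}
\item Distance distortion only propagates smallness backward in time, so it needs SSW's fiber collapse as input.
\item Gauss's equation for a complex submanifold bounds fiber curvature from above but does not bound the second fundamental form, so your lower diameter bound is not justified.
\end{itemize}
Finally, a smooth limit of complex structures each biholomorphic to $\PP^s$ need not itself be biholomorphic to $\PP^s$. You need a rigidity theorem (Hirzebruch--Kodaira--Yau, as in the paper) before invoking Tian--Zhu or Bando--Mabuchi.
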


\paragraph{Outline of the proof of Main Theorem}

The proof has two parts. First, we identify all tangent flows on the regular part of $M$. By the compactness and regularity results of \cite{Bam20b,CHM25}, any tangent flow is modeled by an orbifold  K\"ahler--Ricci shrinker
$(X,g,J,f)$ with isolated singularities. Localizing the estimates of \cite{JianSongTian2023FiniteTimeSingularities} we construct a
global holomorphic function at most linear growth
\[
u:X\longrightarrow\mathbb C,
\qquad
|u(x)|\leq C(1+d_g(x,x_0)),
\qquad
0<c\leq |du|_g\leq C.
\]
The nonvanishing of $du$ makes every orbifold singularity removable.
The splitting lemma with the classification of 1-dim Kähler Ricci shirnker then gives $X=\mathbb P^1\times\mathbb C$. Hence every 
tangent flow is the standard shrinking cylinder.

The second part is to prove the Type I curvature control. We first prove a moving-basepoint entropy lemma by Bamler's 
estimate of pointed Nash entropy, which help us to obtain a limit with constant entropy. If the flow is Type~II, we could choose $x_i\in M$ and
$t_i\nearrow T$ such that $(T-t_i)|\operatorname{Rm}_{g(t_i)}|(x_i)\longrightarrow\infty$. Bamler's compactness theorem and the moving-basepoint entropy lemma give the cylinder $\mathbb P^1\times\mathbb C$ as the
limit. Combining the distance estimate in \cite{CHM25}, we can embedding the limit curve $\mathbb{P}^1\times \{0\}$ in the high curvature region in $M$ to obtain contradiction. Thus the flow is Type~I.
\paragraph{Acknowledgements}
The authors are grateful to Professors Wangjian Jian and Jian Song for pointing out some shortcomings in an earlier version of this paper. We also thank them for bringing to our attention their recent work on closely related results.

\paragraph{Use of AI.} In parts of this work, the authors were assisted by ChatGPT 5.6 Plus. All mathematical statements, hypotheses, references, proofs, and
conclusions were independently verified by the authors, who take full
responsibility for the contents of the paper.

\section{Preliminaries}
\subsection{Ruled surfaces and collapsing fibers}

Let $p:M\to \Sigma$ be a holomorphic fibration from a compact Kähler surface onto a compact Riemannian surface $\Sigma$ with general fiber biholomorphic to $\mathbb P^1$. Let $\Delta\subset \Sigma$ denote the set of points whose fibers are not biholomorphic to $\mathbb P^1$. We consider the finite time Kähler Ricci flow $(M,g(t))_{t\in[0,T)}$ with $[\omega_0]-Tc_1(M)=p^*[\omega_{\Sigma}]$.

The above homology class assumptions strongly restricts the singular fibers. \begin{lemma}\label{lem:singular-fiber}
Every singular fiber of $p$ is the union of two transversely
intersecting $(-1)$-curves.
\end{lemma}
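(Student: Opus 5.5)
The plan is to combine the standard structure of singular fibers of a $\mathbb P^1$-fibration with the nefness of the limiting class $\alpha_T=[\omega_0]-Tc_1(M)=p^*[\omega_\Sigma]$.

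First, recall that a fibration $p:M\to\Sigma$ from a smooth compact surface whose general fiber is $\mathbb P^1$ is, by the classification of surfaces, a ruled surface blown up finitely many times. Every fiber is a tree of smooth rational curves, and every reducible fiber contains a $(-1)$-curve. By Zariski's lemma, for a fiber $F=\sum m_iC_i$ with at least two components we have $C_i^2<0$ for all $i$, and $F^2=0$, $K_M\cdot F=-2$.

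Second, we use the class assumption. For any component $C$ of a singular fiber we have $p^*[\omega_\Sigma]\cdot C=0$, since $C$ is contracted by $p$. Because $[\omega_0]$ is Kähler, this gives
\[
0=\alpha_T\cdot C=[\omega_0]\cdot C-T\,c_1(M)\cdot C ,
\]
so $-K_M\cdot C=c_1(M)\cdot C=T^{-1}[\omega_0]\cdot C>0$. Adjunction for a smooth rational curve gives $K_M\cdot C=-2-C^2$. Then $K_M\cdot C<0$ together with $C^2<0$ forces $C^2=-1$. Hence every component of a singular fiber is a $(-1)$-curve with $K_M\cdot C=-1$.

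Third, we count. Write $F=\sum_{i=1}^k m_iC_i$. Then
\[
-2=K_M\cdot F=-\sum m_i ,
\]
so $\sum m_i=2$. Since the fiber is reducible, $k\ge2$, so $k=2$ and $m_1=m_2=1$. Thus $F=C_1+C_2$, and
\[
0=F^2=-1-1+2\,C_1\cdot C_2
\]
gives $C_1\cdot C_2=1$. The two curves therefore meet transversally at a single point.

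Finally, we must rule out singular fibers that are irreducible but not isomorphic to $\mathbb P^1$, for example a multiple fiber $mC$ or a singular irreducible curve. Here the same arithmetic applies. An irreducible fiber $F=mC$ has $C^2=0$ and $K_M\cdot F=-2$, so $mK_M\cdot C=-2$, while adjunction $2p_a(C)-2=K_M\cdot C$ gives $m=1$ and $p_a(C)=0$, so $C$ is a smooth $\mathbb P^1$, which is not a singular fiber.

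I expect the main obstacle to be justifying the first step cleanly in the Kähler (not necessarily projective) setting: that all fiber components are smooth rational curves, and that each has negative self-intersection when the fiber is reducible. I would cite the classification of surfaces (Barth–Hulek–Peters–Van de Ven), which states that a surface fibered with general fiber $\mathbb P^1$ is obtained from a geometrically ruled surface by blow-ups, and then apply Zariski's lemma.
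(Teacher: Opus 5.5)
Your proposal is correct and follows the paper's route: $\alpha_T\cdot C=0$ on fiber components gives $c_1(M)\cdot C>0$, which with adjunction and $C^2<0$ makes every component a $(-1)$-curve, and intersection numerics then finish. Your final count ($K_M\cdot F=-2$ with $K_M\cdot C_i=-1$, so $\sum m_i=2$) is a quicker substitute for the paper's summation of the relations $m_\alpha=\sum_{\beta\neq\alpha}m_\beta\,(C_\alpha\cdot C_\beta)$, and your treatment of irreducible non-reduced fibers covers a case the paper leaves implicit; also, $2p_a(C)-2=K_M\cdot C+C^2\leq-2$ already forces $p_a(C)=0$, so you do not need the classification of surfaces to know the components are smooth rational curves.
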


\begin{proof}
We set $F=\textstyle\sum_{\alpha=1}^{N}m_\alpha C_\alpha$, each $C_\alpha$ is a smooth rational curve. We first have
\begin{align*}
0 = F\cdot C_{\alpha} = m_{\alpha}C^2_{\alpha} + \textstyle \sum_{\beta \neq \alpha} m_{\beta} (C_{\alpha}\cdot C_{\beta}),
\end{align*}
which implies $C^2<0$. The adjunction formula gives
\[
0<T^{-1}[\omega_0]\cdot C_\alpha
=c_1(M)\cdot C_\alpha
=2+C_\alpha^2,
\]
so $C_\alpha^2=-1$ and $m_\alpha
=\textstyle\sum_{\beta\ne\alpha}
m_\beta(C_\alpha\cdot C_\beta)$. We set $d_\beta:=\textstyle\sum_{\alpha\ne\beta}C_\alpha\cdot C_\beta$. Summing over $\alpha$ gives $\textstyle\sum_\beta m_\beta(d_\beta-1)=0$, so $d_\beta=1$ for all $\beta$.
This forces $N=2$, $C_1\cdot C_2=1$, and $m_1=m_2=:m$. Then $2=c_1(M)\cdot F=2m$, so $m=1$.
\end{proof}

By the above lemma, we know $M$ is biholomorphic to $\operatorname{Bl}_{p_1,\ldots,p_N}(M_0)$ where $M_0$ is a ruled surface and the points $p_i$ lie on  distinct fibers of $M_0$, we let $\pi\colon \operatorname{Bl}_{p_1,\ldots,p_N}(M_0)\to M_0$ denote the sequence of blow‑ups. For the Kähler–Ricci flow on projective bundles, Song--Székelyhidi--Weinkove establish the two-sided Schwarz type estimate and diameter estimate of fiber \cite{SSW}. In fact, their estimate also holds on the regular part of a $\mathbb{P}^1  $-fibration. 
\begin{lemma}\cite{SSW}
\label{lem:SSW}
Under the above assumptions, suppose that the K\"ahler--Ricci flow $(M,\omega(t))_{t\in[0,T)}$ satisfies $[\omega_0]-T c_1(M)=p^*[\omega_\Sigma]$. Then there exists $c>0$ such that
$c\,p^*\omega_\Sigma \leq\omega(t)$ on $M$. And for any compact set
$\mathcal{K}\Subset \Sigma \setminus\Delta$, there exists $C_\mathcal{K}>0$ such that
\[
 \omega(t)\leq C_\mathcal{K}\omega_0, \quad \sup\nolimits_{q\in K}\operatorname{diam}_{g(t)}F_q
 \leq C_\mathcal{K}(T-t)^{1/3},
 \quad\text{on }p^{-1}(\mathcal{K}) \,\, \text{for}\, \,t\in[0,T).
\]
\end{lemma}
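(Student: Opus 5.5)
The plan follows Song--Székelyhidi--Weinkove, reducing everything to parabolic maximum principle arguments. I localize only where the fibration fails to be a submersion, i.e. over $\Delta$.

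\emph{Reduction to a potential equation.} Fix a smooth representative $\chi\in c_1(M)$, e.g. $\chi=-\Ric(\Omega)$ for a smooth volume form $\Omega$. Set $\hat\omega_t=\frac{1}{T}\bigl((T-t)\omega_0+t\,p^*\omega_\Sigma\bigr)$. The class condition $[\omega_0]-Tc_1(M)=p^*[\omega_\Sigma]$ is arranged (changing $\Omega$) so that $\hat\omega_t\in[\omega(t)]$. Write $\omega(t)=\hat\omega_t+i\partial\bar\partial\varphi$, where $\partial_t\varphi=\log\frac{\omega^2}{\Omega}$. The standard estimates give $|\varphi|\le C$ and $\partial_t\varphi\le C$, by comparing $\omega^2$ with $\hat\omega_t^2\sim (T-t)\Omega$ away from the singular fibers. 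I also expect the volume form lower bound $\omega^2\ge c(T-t)\Omega$ at least on compact subsets of $p^{-1}(\Sigma\setminus\Delta)$, via the usual quantity $\dot\varphi+A\varphi$-type arguments.

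\emph{Lower bound $\omega\ge c\,p^*\omega_\Sigma$.} Apply the parabolic Schwarz lemma to the holomorphic map $p\colon(M,\omega(t))\to(\Sigma,\omega_\Sigma)$. Let $u=\operatorname{tr}_{\omega}p^*\omega_\Sigma$. Chern--Lu gives
\[(\partial_t-\Delta)\log u\le C u,\]
where $C$ bounds the bisectional curvature of $\omega_\Sigma$; the Ricci term cancels for Kähler--Ricci flow. Combine this with $(\partial_t-\Delta)\varphi=\partial_t\varphi-2+\operatorname{tr}_\omega\hat\omega_t$ and $\hat\omega_t\ge\frac{t}{T}p^*\omega_\Sigma$. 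The function $\log u-A\varphi$ then satisfies a differential inequality whose maximum principle yields $u\le C$. This step is global, because $p$ is holomorphic everywhere, including at singular fibers where $dp$ degenerates; the argument only needs $u\ge0$ smooth. This gives the first claim.

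\emph{Upper bound on $p^{-1}(\mathcal K)$ and fiber diameter.} This is the main obstacle, since the estimate must be localized away from $\Delta$. I would first try a cutoff $\rho=p^*\eta$, with $\eta$ supported in $\Sigma\setminus\Delta$ and $\eta=1$ on $\mathcal K$. Apply the maximum principle to
\[\rho^{k}\operatorname{tr}_{\omega_0}\omega\,e^{-A\varphi},\]
or rather to $\log\operatorname{tr}_{\omega_0}\omega-A\varphi+\log\rho^k$. Here $|\partial\rho|^2_\omega\le C\,\operatorname{tr}_\omega p^*\omega_\Sigma\le C$ by the previous step, which controls the cutoff error terms. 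The Aubin--Yau/Chern--Lu inequality then gives $\omega\le C_{\mathcal K}\omega_0$, using the bound $\partial_t\varphi\le C$ to handle the $\operatorname{tr}_\omega\omega_0$ terms.

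For the fiber estimate, restrict to a fiber $F_q$, $q\in\mathcal K$. The fiber area is $\int_{F_q}\omega(t)=[\omega(t)]\cdot F=\frac{T-t}{T}[\omega_0]\cdot F\sim (T-t)$. Combine this with the upper bound on $\omega|_{F_q}$ relative to a fixed round metric. This needs the refined form $\omega|_{F}\le C(T-t)^{a}\omega_0|_F$ on compact sets, which I would get by running the same maximum principle with $\operatorname{tr}_{\omega_0}\omega$ replaced by the fiberwise trace weighted by $(T-t)^{-2/3}$, as in SSW. An area bound plus a gradient/curvature-free interpolation (area $\lesssim T-t$, metric bounded by $\omega_0$) then gives $\operatorname{diam}F_q\le C(T-t)^{1/3}$. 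The argument is the standard one: a curve of length $L$ on a fiber with bounded metric sweeps area $\gtrsim L^3$-scale comparisons. Hence $L^3\lesssim (T-t)$.
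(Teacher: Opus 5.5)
Your outline (potential bounds, the parabolic Schwarz lemma, a uniform upper bound for $\omega(t)$, then a fibre-diameter argument) is the SSW route that the paper imports by citation. The hardest step, however, does not follow from the maximum principle you describe: the bound $\omega\le C_{\mathcal K}\omega_0$ is missing its argument. The Aubin--Yau/Chern--Lu computation gives $(\partial_t-\Delta)\log\operatorname{tr}_{\omega_0}\omega\le C_0\operatorname{tr}_\omega\omega_0$. The only good term produced by $-A\varphi$ is $-A\operatorname{tr}_\omega\hat\omega_t$, and by the Schwarz bound $A\operatorname{tr}_\omega\hat\omega_t=\frac{A(T-t)}{T}\operatorname{tr}_\omega\omega_0+O(A)$. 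So it dominates $C_0\operatorname{tr}_\omega\omega_0$ only if $A\gtrsim (T-t)^{-1}$. The bad term is not harmless: $\int_{F_q}\omega(t)=\frac{T-t}{T}\int_{F_q}\omega_0$ forces $\operatorname{tr}_\omega\omega_0\ge T/(T-t)$ somewhere on every fibre. Hence at a maximum point of your $Q$ nothing excludes an error of size $(T-t)^{-1}$ against good terms of size $O(1)$, and no bound results. The inequality $\partial_t\varphi\le C$ points the wrong way and produces no negative multiple of $\operatorname{tr}_\omega\omega_0$. The natural repair is to add $C_0(t\partial_t\varphi-\varphi)$, whose heat operator is $2-\operatorname{tr}_\omega\omega_0$. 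But this only gives $\log\operatorname{tr}_{\omega_0}\omega\le C+C_0T\,|\partial_t\varphi|$, which is useless because $\partial_t\varphi\to-\infty$ as the volume collapses. In general a ruled surface also carries no Kähler metric of nonnegative bisectional curvature that would make $C_0\le 0$. So this upper bound, which is the genuinely new estimate in SSW, needs an idea your proposal does not supply. The localization is also unjustified: the cutoff contributes $k|\nabla\rho|^2_\omega/\rho^2$. This is not bounded merely because $|\nabla\rho|_\omega\le C$, and absorbing it needs a good term that is again missing.

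Two further points. In the Schwarz step, $-A\partial_t\varphi$ is unbounded above for the same reason, so $\log u-A\varphi$ does not close as written. The standard fix is to normalize $\partial_t\varphi=\log\frac{\omega^2}{(T-t)^2\Omega}$, which changes $\varphi$ by a bounded function of $t$. Since $\hat\omega_t^2\ge c(T-t)^2\Omega$, you can then absorb $-A\partial_t\varphi$ into $-\frac A2\operatorname{tr}_\omega\hat\omega_t\le -cA\,e^{-\partial_t\varphi/2}$, and this works on all of $M$. For the diameter, the ``refined'' bound $\omega|_F\le C(T-t)^a\omega_0|_F$ is neither produced by your sketch nor needed. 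An upper bound on the metric alone also does not make a curve ``sweep area $\gtrsim L^3$''. What works uses three ingredients:
\begin{itemize}
\item the fibrewise bound $\omega(t)|_{F_q}\le C\omega_0|_{F_q}$;
\item the area identity $\int_{F_q}\omega(t)=c(T-t)$;
\item a length--area argument.
\end{itemize}
For the last, surround $x,y\in F_q$ by $\omega_0$-circles of radius $\sim\delta$ and join them by a one-parameter family of transversal arcs. Cauchy--Schwarz against the area yields a path of $\omega(t)$-length $\lesssim\delta+\sqrt{T-t}+\sqrt{(T-t)/\delta}$, and choosing $\delta=(T-t)^{1/3}$ gives the exponent. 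In complex dimension two the fibrewise bound comes directly from the estimates you already have. In coordinates $(z,w)$ with $p=z$ one has $\operatorname{tr}_\omega p^*\omega_\Sigma=h\,g_{w\bar w}/\det g$, so the Schwarz bound and $\omega^2\le C\Omega$ give $g_{w\bar w}\le C$ on $p^{-1}(\mathcal K)$. The diameter estimate therefore bypasses the hard step. The full bound $\omega\le C_{\mathcal K}\omega_0$ does not: the paper uses it later for the lower gradient bound in Proposition~\ref{prop:base-function}.
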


\subsection{Compactness of Ricci flow}
In this section, we review some of Bamler's weak compactness and partial regularity theories that we need \cite{Bam20a,Bam23,Bam20b}. For the Kähler case, we refer to \cite{CHM25}.
\subsubsection{Conjugate heat kernel and pointed Nash entropy}
Let $(M,g(t)_{t\in[0,T)})$ be a Ricci flow on closed manifold. For $(x,t)\in M\times [0,T)$, let $K(x,t,\cdot,\cdot):M\times[0,T)\to (0,\infty)$ denote the conjugate heat kernel based at $(x,t)$. Define probability measures $d\nu_{x,t;s}:=K(x,t;\cdot,s)dg_s$, where $dg_s$ denotes Riemannian volume measure of $(M,g_s)$ for $s\in[0,T)$.

Following \cite[Definition 9.2]{Bam20a}, the $P^*$-parabolic
neighborhood centered at $(x_0,t_0)\in M\times[0,T)$ is defined by
\[
P^*(x_0,t_0;A,-T^-,T^+)
:=
\left\{
(x,t)\in M\times[0,T)
\,\middle|\,
\begin{aligned}
&t\in[t_0-T^-,t_0+T^+],\\
&d_{W_1}^{g_{t_0-T^-}}
\bigl(\nu_{x_0,t_0;t_0-T^-},\nu_{x,t;t_0-T^-}\bigr)<A
\end{aligned}
\right\}.
\]
Here, $d_{W_1}^{g_t}$ denotes the $1$-Wasserstein distance between
probability measures on the metric space $(M,d_{g_t})$; see
\cite[Section 2]{Bam23}.

Fix $x_0\in M$ and let $T_i\nearrow T$. After passing to a
subsequence, $K(x_0,T_i;\cdot,\cdot)$ converges in
$C_{\mathrm{loc}}^\infty(M\times(0,T))$ to a positive solution of the
conjugate heat equation \cite[Lemma 2.2]{MM15},
\[
K(x_0,T;\cdot,\cdot)
:=
\lim_{i\to\infty}K(x_0,T_i;\cdot,\cdot)
\colon M\times[0,T)\longrightarrow\mathbb{R}.
\]
For $t\in[0,T)$, define $d\nu_{x_0,T;t}=K(x_0,T;\cdot,t)\,dg_t$. Then, for any $\varepsilon>0$,
\begin{align*}
    \lim_{i\to\infty}
\sup_{t\in[0,T-\varepsilon]}
d_{W_1}^{g_t}\bigl(\nu_{x_0,T_i;t},\nu_{x_0,T;t}\bigr)=0
\end{align*}
by \cite[Lemma 2.35]{Bam20b}. This $W_1$-convergence also yields
\[
\int_M\int_M d_t^2(x,y)\,
d\nu_{x_0,T;t}(x)\,d\nu_{x_0,T;t}(y)
\leq H_{n}(T-t),
\]
where $H_n:=\frac{(n-1)\pi^2}{2}+4$. Consequently, for each $t\in[0,T)$, there is a point $z\in M$
satisfying $\int_M d_t^2(z,y)\,d\nu_{x_0,T;t}(y)
\leq H_{n}(T-t)$; see \cite[Section 3]{Bam20a}. Such a point $(z,t)$ is called an
$H_{n}$-center of $(x_0,T)$.

We now recall the notion of pointed Nash entropy and some related results.
\begin{definition}\cite[Section 2.6]{Bam20b}
    $K(x_0,T;\cdot,\cdot)$ and $\nu_{x_0,T;t}$ are called conjugate heat kernels based at $(x_0,T)$. Define $f \in C^\infty(M \times [0,t))$ by $K(x_0,T;\cdot,t) = (2\pi\tau)^{-n/2} e^{-f} dg_t$, where $\tau := T-t$. Then the pointed Nash entropy at $(x_0,T)$ is given by
\[
\mathcal N_{x_0,T}(\tau) := \int_M f \, d\nu_{x_0,T;t} - \frac{n}{2}.
\]
\end{definition}

\begin{proposition}\cite[Corollary 5.11]{Bam20a}\label{bamen}
    If $R(\cdot,s) \ge R_{\min}$ and $s < t^* \le t_1,t_2$, $s,t^*,t_1,t_2\in I$, then for $x_1,x_2\in M$
    \begin{align*}
        \mathcal N_{x_1,t_1}(t_1-s)-\mathcal N_{x_2,t_2}(t_2-s)
\le \left(\frac{n}{2(t^*-s)}-R_{\min}\right)^{1/2}
d_{W_1}^{g_{t^*}}\big(\nu_{x_1,t_1}(t^*),\nu_{x_2,t_2}(t^*)\big)
+\frac{n}{2}\log\left(\frac{t_2-s}{t^*-s}\right).
    \end{align*}
\end{proposition}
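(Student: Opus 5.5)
This is Bamler's result \cite[Corollary 5.11]{Bam20a}. The plan below reconstructs it from three ingredients: a spatial Lipschitz bound for the Nash entropy at a fixed base time, the reproduction formula for conjugate heat kernels, and entropy comparisons for mixtures of probability measures. Throughout, write $\tau_i=t_i-s$ and $\tau^*=t^*-s$.

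\textbf{Step 1: Lipschitz bound at the common time $t^*$.} Fix the base time $t^*$ and show that $y\mapsto \mathcal N_{y,t^*}(\tau^*)$ satisfies
\[
|\nabla_y \mathcal N_{y,t^*}(\tau^*)|\le \Big(\frac{n}{2\tau^*}-R_{\min}\Big)^{1/2}.
\]
Differentiating under the integral, $\nabla_y\mathcal N_{y,t^*}(\tau^*)$ is expressed by pairing $f$ with $\nabla_y K(y,t^*;\cdot,s)$. By Cauchy--Schwarz it is bounded by a variance term and an $L^2$ gradient term:
\[
\Big(\int |\nabla_y \log K|^2\, d\nu_{y,t^*;s}\Big)^{1/2}\Big(\int (f-\bar f)^2\, d\nu_{y,t^*;s}\Big)^{1/2}.
\]
The two factors are then controlled by Bamler's integrated gradient estimate for the conjugate heat kernel and by his variance bound for $f$. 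The latter uses the Poincaré inequality for $\nu_{\cdot;s}$ together with the lower bound $R\ge R_{\min}$, which enters through $\partial_\tau(\tau\mathcal N)$. Integrating the gradient bound along minimizing $g_{t^*}$-geodesics shows that $y\mapsto\mathcal N_{y,t^*}(\tau^*)$ is Lipschitz with this constant on $(M,d_{g_{t^*}})$.

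\textbf{Step 2: Reducing the basepoints $(x_i,t_i)$ to time $t^*$.} By the reproduction formula, $\nu_{x_i,t_i;s}=\int_M \nu_{y,t^*;s}\,d\nu_{x_i,t_i;t^*}(y)$. Write $\mathcal N=-\int\log\rho\,d\nu-\frac n2\log(4\pi\tau)-\frac n2$, where $\rho$ is the density of $\nu$. Concavity of Shannon entropy under mixtures then gives the lower bound
\[
\mathcal N_{x_2,t_2}(\tau_2)\ \ge\ \int_M \mathcal N_{y,t^*}(\tau^*)\,d\nu_{x_2,t_2;t^*}(y)-\frac n2\log\frac{\tau_2}{\tau^*}.
\]
This produces exactly the logarithmic error term in the statement. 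For $x_1$ the plan is to prove the matching upper bound without loss,
\[
\mathcal N_{x_1,t_1}(\tau_1)\le \int_M \mathcal N_{y,t^*}(\tau^*)\,d\nu_{x_1,t_1;t^*}(y).
\]
I would first try a Jensen argument applied to $f_{x_1}=-\log\int K_y\,d\nu_{x_1,t_1;t^*}(y)+\text{const}$, combined with the fact that $\tau\mapsto\mathcal N(\tau)$ is nonincreasing. This monotonicity is what allows the larger scale $\tau_1\ge\tau^*$ to be compared with $\tau^*$ without paying a further logarithmic term.

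\textbf{Step 3: Comparing the two averages.} Subtract the two bounds from Step 2 to get
\[
\mathcal N_{x_1,t_1}(\tau_1)-\mathcal N_{x_2,t_2}(\tau_2)\le \int \mathcal N_{y,t^*}(\tau^*)\,d\nu_{x_1,t_1;t^*}-\int \mathcal N_{y,t^*}(\tau^*)\,d\nu_{x_2,t_2;t^*}+\frac n2\log\frac{\tau_2}{\tau^*}.
\]
The difference of the two integrals is the integral of the Lipschitz function from Step 1 against the difference of the two measures at time $t^*$. By Kantorovich--Rubinstein duality it is bounded by the Lipschitz constant times $d_{W_1}^{g_{t^*}}\bigl(\nu_{x_1,t_1;t^*},\nu_{x_2,t_2;t^*}\bigr)$, which is the claimed inequality.

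\textbf{Expected main obstacle.} The delicate step is the loss-free upper bound for $\mathcal N_{x_1,t_1}(\tau_1)$ in Step 2. Mixing measures naturally increases entropy, and hence increases $\mathcal N$, so an upper bound must exploit the parabolic structure rather than convexity alone. I would try to derive it from the monotonicity of $\tau\mapsto\mathcal N_{x_1,t_1}(\tau)$ together with the semigroup identity for $K$. If that fails, the fallback is to follow Bamler's proof directly, which differentiates $\mathcal N$ along the conjugate heat flow from $t_1$ down to $t^*$.
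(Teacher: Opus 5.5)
The paper gives no argument of its own here; it simply quotes Bamler. Bamler obtains Corollary 5.11 from the estimates $|\nabla\mathcal N^*_s|\le(\frac{n}{2(t-s)}-R_{\min})^{1/2}$ and $-\frac{n}{2(t-s)}\le\square\mathcal N^*_s\le0$, where $\mathcal N^*_s(x,t):=\mathcal N_{x,t}(t-s)$. He integrates $\frac{d}{dt}\int\mathcal N^*_s(\cdot,t)\,d\nu_{x_i,t_i;t}=\int\square\mathcal N^*_s\,d\nu_{x_i,t_i;t}$ over $[t^*,t_i]$ and then applies Kantorovich--Rubinstein at time $t^*$. Your Steps 1 and 3 are this skeleton. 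Your concavity argument for the lower bound at $x_2$ is also correct; it is the integrated form of $\square\mathcal N^*_s\ge-\frac{n}{2(t-s)}$.

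\textbf{The gap.} The loss-free upper bound at $x_1$ is flagged but not supplied, and the route you suggest cannot give it. Write $H(\nu)=-\int\rho\log\rho\,dg_s$ and $\mu=\nu_{x_1,t_1;t^*}$. The bound then says $H(\nu_{x_1,t_1;s})-\int H(\nu_{y,t^*;s})\,d\mu(y)\le\frac n2\log\frac{t_1-s}{t^*-s}$, and on $\mathbb R^n$ this is an equality. Jensen for $-\log$ only bounds the entropy of the mixture by cross-entropies, i.e.\ by $\int H(\nu_{y,t^*;s})\,d\mu$ plus an averaged Kullback--Leibler divergence between the $\nu_{y,t^*;s}$. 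On $\mathbb R^n$ that excess equals $n\frac{t_1-t^*}{t^*-s}$, which is strictly more than the allowed $\frac n2\log\frac{t_1-s}{t^*-s}$. Monotonicity of $\tau\mapsto\mathcal N_{x_1,t_1}(\tau)$ cannot repair this, for two reasons. It only compares scales at the single basepoint $(x_1,t_1)$; for example, it relates $\mathcal N_{x_1,t_1}(t_1-s)$ to $\mathcal N_{x_1,t_1}(t_1-t^*)$, which has nothing to do with the $\mu$-average of $\mathcal N_{y,t^*}(t^*-s)$. It is also vacuous on $\mathbb R^n$, where every Nash entropy vanishes.

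\textbf{The missing ingredient.} What is needed is a sharp parabolic input, the Fisher-information bound
\[
\int_M\frac{|\nabla_xK(x,t;y,s)|^2}{K(x,t;y,s)}\,dg_s(y)\le\frac{n}{2(t-s)}.
\]
This is exactly $\square\mathcal N^*_s\le0$. Indeed, since $K(\cdot,\cdot;y,s)$ solves the heat equation, $\square_{x,t}\bigl(-\int K\log K\,dg_s\bigr)=\int|\nabla_xK|^2K^{-1}\,dg_s$, while $\square\bigl(-\frac n2\log(t-s)\bigr)=-\frac{n}{2(t-s)}$.

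The bound follows from the reverse Poincar\'e inequality $|\nabla u|^2(x,t)\le\frac{1}{2(t-s)}\operatorname{Var}_{\nu_{x,t;s}}\bigl(u(\cdot,s)\bigr)$, which is a consequence of $\square|\nabla u|^2=-2|\nabla^2u|^2$ along Ricci flow. For a unit vector $v$, let $u$ solve the heat equation with $u(\cdot,s)=h:=\partial_v\log K(x,t;\cdot,s)$. Then $u(x,t)=0$ and $\partial_vu(x,t)=\int h^2\,d\nu_{x,t;s}$, so $\int h^2\,d\nu_{x,t;s}\le\frac{1}{2(t-s)}$. Summing over an orthonormal frame gives the bound.

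Integrating $\square\mathcal N^*_s\le0$ along $\nu_{x_1,t_1;t}$ for $t\in[t^*,t_1]$, which is your fallback, then yields the missing inequality. Note that you already invoke this estimate in Step 1 without connecting it to Step 2.

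\textbf{A correction to Step 1.} The same directional estimate is needed there. If you apply Cauchy--Schwarz with the full norm $\int|\nabla\log K|^2\,d\nu\le\frac{n}{2\tau^*}$ together with $\operatorname{Var}(f)\le n-2\tau^*R_{\min}$, you lose a factor $\sqrt n$. Instead, bound $\partial_v\mathcal N$ for each unit $v$ using $\int(\partial_v\log K)^2\,d\nu\le\frac{1}{2\tau^*}$.

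(In the paper's only use of the proposition, the moving-basepoint entropy lemma, one has $t_1=t_2=t^*$. There Step 2 is trivial and only Step 1 matters.)
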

By the above proposition, we know $\mathcal N_{x_0,T}(\tau)$ is independent of the sequence $T_i\nearrow T$, so $\mathcal N_{x_0,T}(0) := \lim_{\tau \searrow 0} \mathcal N_{x_0,T}(\tau) \in (-\infty,0]$ depends only on $x_0\in M$.
\subsubsection{Compactness and partial regularity of Ricci flow}
We first introduce the notion of orbifold Kähler Ricci shrinker, which appears as a limit in Bamler's compactness theory for Kähler Ricci flow.

\begin{definition}\cite[Definition 2.5]{CHM25}
    An orbifold Kähler Ricci shrinker $(X,g,J,f)$ is a Kähler orbifold $(X,g)$ with complex structure $J$, together with a smooth real--valued function $f\in C^\infty(X)$ satisfying
\[
\operatorname{Ric}_g + \nabla^2 f = g,\qquad \mathcal{L}_{\nabla f}J = 0.
\]
We say $X$ has \textit{isolated singularities} if $X\setminus X_{\mathrm{reg}}$ is discrete. The isotropy group of an isolated singularity is a non--trivial finite subgroup of $U(n)$ acting freely on $\mathbb{C}^n\setminus\{0\}$.
\end{definition}

Because $\nabla f$ vanishes at each point of $X\setminus X_{reg}$, we can define a 1-parameter family of biholomorphisms of $(X,J)$ that preserves $X_{reg}$ by defining $\varphi_{-1}=id_X$ and $\partial_t \varphi_t(x)=\nabla f(\varphi_t(x))/|t|$. Set $g_t:=|t|\varphi_t^*g$, $(X,(g_t)_{t\in(-\infty,0)},J)$ defines an orbifold Kähler Ricci flow. At each orbifold singular point, $X$ is locally biholomorphic to $\mathbb{C}^n/\Gamma$ for some finite subgroup $\Gamma\subseteq U(n)$. When all $\Gamma$ are trivial, we call $X$ a smooth Kähler‑Ricci shrinker.

Let $(M_i,(g_{i,t})_{t\in(-T_i,0]},(x_i,0))$ be a sequence of pointed Ricci flows on compact manifolds of the same dimension $n$ and $T_\infty:=\lim_{i\to\infty} T_i\in(0,\infty]$. By the results of \cite{Bam23} we may pass to a subsequence and obtain $\mathbb F$-convergence on compact time-intervals
\[
\big(M_i,(g_{i,t})_{t\in(-T_i,0]},(\nu_{x_i,0;t})_{t\in(-T_i,0]}\big)
\xrightarrow[\,i\to\infty\,]{ \mathbb F,\mathfrak C }
\big(\mathcal X,(\nu_{x_\infty;t})_{t\in(-T_\infty,0]}\big),
\]
within some correspondence $\mathfrak C$ (for more details see \cite{Bam23}). For some uniform $0<\tau_0<T_\infty$, $Y_0<\infty$ and any $i$, under the following non-collapsing assumption 
\begin{align}\label{noncollapse}
    \mathcal N_{x_i,0}(\tau_0)\ge -Y_0,
\end{align}
 the limit metric flow pair admits a regular-singular decomposition\cite[Theorem 2.4,2.5]{Bam20b}. If it also satisfy 
 \begin{align}
    \lim_{i\to\infty}\mathcal N_{x_i,0}(\tau)=W, \quad \text{for any} \quad \tau\in(0,T)
 \end{align}
 for some constant $W$, then the limit $\big(\mathcal X,(\nu_{x_\infty;t})_{t\in(-T_\infty,0]}\big)$ is a metric soliton \cite[Theorem 2.18]{Bam20b}. In compact 2-dim Kähler Ricci flow, such metric soliton is an orbifold Kähler Ricci shrinker $(X,g,J,f)$. More precisely, there exists an precompact exhaustion $(V_i)$ of $X_{reg}$ along with open embeddings $\psi_i: V_i \to M_i$ such that 
 \begin{align*}
    \psi_i^*g_{i,t}\to g_t, \qquad \psi_i^*J_M \to J, \qquad \psi_i^* \nu_{x_i,0;t}\to \nu_{x_\infty;t}.
 \end{align*}
in $C^{\infty}_{loc}(X_{reg}\times (-\infty,0))$ as $i\to \infty$. See also \cite[Theorem 2.37]{Bam20b}, \cite[Theorem 2.5]{HJ23}, \cite[Theorem 2.8]{CHM25} for more discussion.

We present the following splitting lemma, which will be used in the sequel.

\begin{lemma}\label{splitting}
Let $(X,g,J,f)$ be a smooth complete K\"ahler--Ricci shrinker. Suppose that there is a holomorphic map $U=(u^1,\ldots,u^k)\colon X\longrightarrow\CC^k$ such that 
\begin{align}\label{spl}
    |U(x)|\leq C\bigl(1+d_g(x,x_0)\bigr), \qquad c|\xi|^2
\leq
|\sum\nolimits_{\alpha=1}^k\xi_\alpha\,du^\alpha|_g^2
\leq
C|\xi|^2
\end{align}
for any $\xi\in\CC^k$. Then $X$ is holomorphically isometric to $Y\times \mathbb{C}^k$ where $Y$ is a complete Kähler Ricci shrinker and $\mathbb{C}^k$ is the Gaussian shrinker. 
\end{lemma}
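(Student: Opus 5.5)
The plan is to show that the holomorphic map $U$ produces parallel holomorphic vector fields, and that these split off a Gaussian factor $\mathbb{C}^k$.

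\textbf{Step 1: Linear growth gives bounded holomorphic gradients.} For each $\alpha$, the function $u^\alpha$ is holomorphic, hence pluriharmonic. Kähler identities give the Bochner formula
\[
\Delta_f |\partial u^\alpha|^2 = |\nabla\partial u^\alpha|^2 + \bigl(\Ric+\nabla^2 f\bigr)(\nabla u^\alpha,\overline{\nabla u^\alpha}) - \ldots .
\]
It is cleaner to argue through $\Delta_f$ directly. Since $\mathcal L_{\nabla f}J=0$, the gradient $\nabla f$ is real holomorphic, and for holomorphic $u$ one gets
\[
\Delta_f u = -\nabla f\cdot u = -\tfrac12 (\nabla f - iJ\nabla f)u .
\]
I would use the standard fact on shrinkers that holomorphic functions of polynomial growth decompose into eigenfunctions of the drift operator $L:=\nabla_{\nabla^{1,0} f}$. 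On a shrinker, $L$ acting on the $L^2(e^{-f})$ space of holomorphic functions has nonnegative discrete spectrum, and linear growth restricts $U$ to eigenvalues $\le 1$ (via the estimate $f\sim d^2/4$ and the weighted $L^2$ bound). Writing $u^\alpha = a^\alpha + v^\alpha$ with $a^\alpha$ constant and $Lv^\alpha = \lambda v^\alpha$, I expect the nondegeneracy $|du|\ge c$ to force $\lambda=1$ up to an additive constant.

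\textbf{Step 2: The eigenfunction equation forces $\nabla^2 u^\alpha=0$.} Take $L v=v$ with $v$ holomorphic. Differentiate $\langle\nabla^{1,0}f,\bar\partial \bar v\rangle = v$ and use $f_{i\bar j}= g_{i\bar j}-R_{i\bar j}$ together with $f_{ij}=0$. This gives
\[
v_{,ij} f_{\bar j}+ v_{,j}(g_{i\bar j}-R_{i\bar j})=v_{,i}, \quad\text{that is,}\quad \nabla_{\nabla f}\partial v = \Ric(\partial v).
\]
Combining this with the Bochner identity $\Delta_f|\partial v|^2 = |\nabla\partial v|^2 + |\partial v|^2$ (the shrinker identity for such $v$), I would integrate against $e^{-f}$. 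This is justified because $|\partial v|\le C$ by \eqref{spl} and the weighted volume is finite. The integration gives $\int(|\nabla\partial v|^2+|\partial v|^2 - |\partial v|^2)e^{-f}=0$, hence $\nabla\partial v=0$. The correct normalization has to be checked, but the expected outcome is that each $\nabla^{1,0}u^\alpha$ is a parallel holomorphic vector field. \textbf{This step is the main obstacle.} Making the spectral/eigenvalue reduction rigorous, and showing that no other eigen-components survive, needs the weighted $L^2$ theory. If that fails, I would instead apply the maximum principle to $|\partial u|^2$, using the Cheng–Yau-type gradient estimate from linear growth.

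\textbf{Step 3: Splitting.} Parallel vector fields $Z_\alpha=\nabla^{1,0}u^\alpha$ are linearly independent by the lower bound in \eqref{spl}. Their real parts and their $J$-rotations span a parallel, flat, $J$-invariant distribution of real rank $2k$. Completeness and de Rham decomposition, applied on the universal cover, followed by descending via the global map $U$ whose level sets are connected complete Kähler submanifolds, give $X\cong Y\times\mathbb{C}^k$ holomorphically and isometrically, with $U$ a linear coordinate on the second factor. Finally, $\Ric$ vanishes on the flat factor, so $\nabla^2 f = g$ there. Hence $f = \tfrac14|z|^2 + f_Y$ up to a shift, and $(Y,g_Y,f_Y)$ is a complete Kähler–Ricci shrinker, while $\mathbb{C}^k$ is the Gaussian shrinker.
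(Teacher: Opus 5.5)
Your overall architecture matches the paper's: reduce to the eigenvalue-$1$ case of the drift Laplacian, show the Hessian vanishes, then split by de Rham. Your Step 2 is sound in substance. For real $a$ with $-\Delta_f a=\lambda a$, the weighted Bochner formula with $\Ric+\nabla^2 f=g$ integrates to $\int_X|\nabla^2a|^2e^{-f}=(\lambda-1)\int_X|\nabla a|^2e^{-f}$. For $\lambda=1$ your cutoff justification (bounded $|\partial v|$, finite weighted volume) goes through. Your displayed identity carries a spurious $|\partial v|^2$; for $\lambda=1$ it should read $\Delta_f|\nabla a|^2=2|\nabla^2a|^2$. Your integrated conclusion is nevertheless the right one, and it is a self-contained substitute for the paper's appeal to [XZ26, Prop.~3.2].

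The gap is Step 1. It is the only place the linear-growth hypothesis enters, and you assert it rather than prove it. There are two concrete problems.

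\textbf{Components with $\lambda>1$.} A single weighted $L^2$ bound cannot detect eigenvalues. Every polynomially growing function lies in $L^2_f$: on the Gaussian $\mathbb C$ both $z$ ($\lambda=1$) and $z^2$ ($\lambda=2$) do. So ``$f\sim d^2/4$ plus the weighted $L^2$ bound'' excludes nothing. The paper instead uses a one-parameter family.
\begin{itemize}
\item Pull $v^\alpha=\mathrm{Re}\,u^\alpha$ back along the soliton diffeomorphisms, $\widetilde v^\alpha(x,s)=v^\alpha(\varphi^{-1}_{-e^{-2s}}(x))$.
\item Since $v^\alpha$ is pluriharmonic, it is harmonic for every $g(t)$, so $\partial_s\widetilde v^\alpha=\Delta_f\widetilde v^\alpha$.
\item Linear growth gives $\|\widetilde v^\alpha(\cdot,s)\|^2_{L^2_f}\le Ce^{-2s}$ for $s\le 0$.
\item The eigencoefficients evolve as $a_j(0)e^{-\lambda_j s}$, so letting $s\to-\infty$ kills every component with $\lambda_j>1$.
\end{itemize}

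\textbf{Components with $0<\lambda<1$.} These are not excluded by the nondegeneracy $|du|\ge c$, which cannot stop such components coexisting with a $\lambda=1$ component. They are excluded by the spectral gap $\lambda_1(-\Delta_f)\ge 1$ on shrinkers (Cheng--Zhou, Hein--Naber). That gap is exactly your own integrated identity read with $\lambda<1$.

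\textbf{Consequences for your write-up.}
\begin{itemize}
\item You should not write $u^\alpha-a^\alpha$ as a single eigenfunction at the outset. Only after the two exclusions and the weighted-mean normalization is $u^\alpha$ minus a constant a pure $\lambda=1$ eigenfunction.
\item The nondegeneracy enters only afterwards, to make the parallel fields independent and span a rank-$2k$ distribution.
\item Your fallback (maximum principle plus a Cheng--Yau estimate) does not close the gap. The bound $|du|\le C$ is already assumed, and the term $\langle\nabla a,\nabla\Delta_f a\rangle$ in the Bochner formula has no sign unless you already know the eigen-equation.
\item Minor: with $\Ric+\nabla^2 f=g$, the Gaussian potential is $\tfrac12|z|^2$, not $\tfrac14|z|^2$.
\end{itemize}
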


\begin{proof}
We follow the proof of \cite{HeOu}. By \cite[Corollary~1.1]{CZ10}, every polynomial-growth function belongs
to $L^2(X,e^{-f}dV_g)$. We denote this space by $L_f^2$ with inner product $\langle u,v\rangle_f:=\textstyle\int_X u\overline{v}\,e^{-f}\,dV_g$. Set $u^\alpha = v^\alpha + \sqrt{-1}\,w^\alpha$. By shifting each $u^\alpha$ by a constant, we may assume
\[
\int_X u^\alpha e^{-f}dV_g = 0.
\]
Let $g(t)=|t|\varphi_t^*g$, $t<0$, be the canonical Ricci flow associated
with $X$. We first prove
\begin{equation}\label{eq:drift-eigenfunction}
-\Delta_fv^\alpha=v^\alpha,
\qquad
\Delta_f=\Delta-\langle\nabla f,\nabla\cdot\rangle.
\end{equation}
Since $v^\alpha$ is harmonic with respect to every $g(t)$. Hence $\widetilde v^\alpha(x,s)
:=
v^\alpha\bigl(\varphi_{-e^{-2s}}^{-1}(x)\bigr)$ satisfies $\partial_s\widetilde v^\alpha =\Delta_f\widetilde v^\alpha$ for $s\leq0$, combining with (\ref{spl}) we have
\begin{align*}
|\widetilde v^\alpha(x,s)|
&=
\left|
v^\alpha(\varphi_{-e^{-2s}}^{-1}(x))
\right|
\leq
C\left(
1+r\bigl(\varphi_{-e^{-2s}}^{-1}(x)\bigr)
\right)
\leq
Ce^{-s}(1+r(x)).
\end{align*}
The above estimates implies the following $L^2$ estimate
\begin{equation}\label{eq:weighted-growth}
||\widetilde v^\alpha(\cdot,s)||_{L_f^2}^2
\leq Ce^{-2s},
\qquad s\leq0.
\end{equation}

Let ${\varphi_j}$ be an orthonormal eigenbasis of $-\Delta_f$,
with $-\Delta_f\varphi_j=\lambda_j\varphi_j$. The spectrum is discrete and its first positive eigenvalue satisfies
$\lambda_1\geq 1$, see \cite{CZ16,HN14}. Set
\[
a_j^\alpha(s)
:=
\int_X\widetilde v^\alpha(\cdot,s)\varphi_j e^{-f}dV_g.
\]
Self-adjointness of $\Delta_f$ gives
\[
\frac{d}{ds}a_j^\alpha(s)
=
-\lambda_j a_j^\alpha(s),
\qquad
a_j^\alpha(s)=a_j^\alpha(0)e^{-\lambda_js}.
\]
Together with \eqref{eq:weighted-growth}, this implies $|a_j^\alpha(0)|^2e^{-2\lambda_js}\leq Ce^{-2s}$. Letting $s\to-\infty$ shows that $a_j^\alpha(0)=0$ whenever
$\lambda_j>1$. The zero eigenspace is excluded by our
normalization, so we obatin (\ref{eq:drift-eigenfunction}).

By (\ref{eq:weighted-growth}) and (\ref{eq:drift-eigenfunction}), \cite[Proposition 3.2]{XZ26} implies $\nabla^2v^\alpha=0$ and holomorphicity gives $\nabla w^\alpha=J\nabla v^\alpha$, so $\nabla v^\alpha$ and $\nabla w^\alpha$ are parallel. (\ref{spl}) shows that the parallel distribution $\mathcal D
=
\operatorname{span}_{\mathbb R}
\{\nabla v^\alpha,\nabla w^\alpha:1\leq\alpha\leq k\}$ has real dimension $2k$. Since $\mathcal D$ is $J$-invariant, the de Rham decomposition theorem and Kähler condition implies holomorphic isometric splitting $X\cong Y\times\mathbb C^k$. The $\mathbb{C}^k$-factor is flat, and the shrinker equation degenerates to $\sqrt{-1}\partial\bar\partial f_{\mathbb{C}^k}
= \omega_{\mathbb{C}^k}$. This gives the splitting $f(z,w)={|z|^2}/{2}+f_Y(w)$, which implies that $N$ itself is a complete K\"ahler-Ricci shrinker.
\end{proof}
\section{Kähler Ricci flow with collapsing fibers}
\subsection{Tangent flow and shrinking cylinder}

In this section, we consider the Kähler tangent flow of $(M,g(t)_{t\in[0,T)},J_M)$ where $M$ admits a $\mathbb{P}^1$ fibration $p: M\to \Sigma$. Fix a conjugate heat kernel $(\nu_{x_0,T;t})$ based at $(x_0,T)$ with $x_0 \in p^{-1}(\Sigma \setminus \Delta)$. For any sequence $\tau_i>0,\tau_i\searrow0$, we set $g_{i,t}:=\tau_i^{-1}g(T+\tau_it)$, by \cite[Theorem 2.8]{CHM25} we have the following $\mathbb{F}$-convergence
\begin{align*}
    \big(M_i,(g_{i,t})_{t\in[-\tau_i^{-1}T,0)},(\nu_{x_0,T;T+\tau_it})_{t\in[-\tau_i^{-1}T,0)}\big)
\xrightarrow[\,i\to\infty\,]{ \mathbb F,\mathfrak C }
\big(\mathcal X,(\nu_{x_\infty;t})_{t\in(-\infty,0)}\big),
\end{align*}
where $\big(\mathcal X,(\nu_{x_\infty;t})_{t\in(-\infty,0)}\big)$ is an orbifold Kähler Ricci shrinker $(X,g,J,f)$ with isolated sorbifold singularities. We set $\widehat g_i:=g_i(-1)$ in the following arguments. We first construct a global holomorphic function with at most linear growth on $X$ which is the localization of \cite[proof of corollary 2.5,2.6]{JianSongTian2023FiniteTimeSingularities}.

\begin{proposition}\label{prop:base-function}
There exists a global orbifold-holomorphic function $u:X\longrightarrow\CC$ such that
\begin{equation*}
    |u(x)|
    \leq C_1\bigl(1+d_g(x,x_*)\bigr),
    \qquad
    0<c_1\leq |du|_g\leq C_1
    \quad\text{on }X_{\reg}.
\end{equation*}
for some constants $c_1,C_1>0$.
\end{proposition}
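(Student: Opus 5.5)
The plan is to build $u$ as a limit of rescaled pullbacks of a local holomorphic coordinate on the base $\Sigma$. Since $x_0\in p^{-1}(\Sigma\setminus\Delta)$, I fix a holomorphic coordinate disc $z\colon D\to\CC$ centered at $q_0=p(x_0)$ with $\overline D\subset\Sigma\setminus\Delta$, and normalize $z(q_0)=0$. On $p^{-1}(D)$ I set
\[
u_i:=\tau_i^{-1/2}\,z\circ p .
\]
Each $u_i$ is $J_M$-holomorphic. The aim is to obtain estimates on $u_i$ that are uniform in $i$ with respect to $\widehat g_i$ (and to $g_{i,t}$ for $t$ in compact subsets of $(-\infty,0)$). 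With these in hand, $\psi_i^*u_i$ will subconverge in $C^\infty_{loc}(X_{\reg})$ to a holomorphic function $u$.

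\textbf{Gradient bounds.} By Lemma \ref{lem:SSW}, on $p^{-1}(\mathcal K)$ with $\mathcal K=\overline{D}$ we have
\[
c\,p^*\omega_\Sigma\le \omega(t)\le C_{\mathcal K}\,\omega_0 .
\]
The lower bound gives $|d(z\circ p)|^2_{g(t)}\le C$. After rescaling, $|du_i|^2_{g_{i,t}}=\tau_i^{-1}|d(z\circ p)|^2_{g(T+\tau_it)}\cdot\tau_i=|d(z\circ p)|^2_{g(T+\tau_it)}\le C$. For the lower bound I need the horizontal directions not to expand, i.e. $\omega(t)|_{\text{horizontal}}\le C p^*\omega_\Sigma$. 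Since the horizontal part of $\omega_0$ is comparable to $p^*\omega_\Sigma$ on $p^{-1}(\mathcal K)$, the bound $\omega(t)\le C_{\mathcal K}\omega_0$ gives this. Concretely, $|\partial (z\circ p)|^2_{g(t)}=\operatorname{tr}_{\omega(t)}(\sqrt{-1}\,dz\wedge d\bar z)\ge c$, because the trace of a pullback $(1,1)$-form of rank one is at least its norm against the larger metric. These bounds pass to the limit and give $c_1\le|du|_g\le C_1$ on $X_{\reg}$.

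\textbf{Growth bound and well-definedness.} First, $|u_i|$ is controlled on the region mapping into $p^{-1}(D)$. A point of $X$ at $\widehat g_i$-distance $r$ from $\psi_i^{-1}$ of the $H_n$-center corresponds to a point at $g(T-\tau_i)$-distance $r\sqrt{\tau_i}$. Since $|d(z\circ p)|\le C$, this gives $|z\circ p|\le C(|z\circ p(\text{center})|+r\sqrt{\tau_i})$, hence $|u_i|\le C(\tau_i^{-1/2}|z(p(\text{center}))|+r)$. So the whole rescaled picture stays in $p^{-1}(D)$ for large $i$, and $u$ is defined on all of $X_{\reg}$.

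\textbf{Main obstacle.} The crucial point is that $H_n$-centers of $(x_0,T)$ at time $T-\tau_i$ project to within $O(\sqrt{\tau_i})$ of $q_0$ in $z$-coordinates. I would get this from the $W_1$/variance bound for $\nu_{x_0,T;t}$, which puts most of the mass within $C\sqrt{T-t}$ of the center. I would combine it with the observation that $z\circ p$ is Lipschitz and, up to an $O(T-t)$ error, satisfies a heat-type equation. Indeed, $\Delta_{g(t)}(z\circ p)$ is bounded, since $z\circ p$ is holomorphic and $\operatorname{tr}_\omega$ of $\sqrt{-1}\partial\bar\partial$ of a holomorphic function is zero, so it is exactly harmonic. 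Hence $\int z\circ p\,d\nu_{x_0,T;t}$ is constant in $t$ and equals $z(q_0)=0$, because heat kernel means of harmonic functions are preserved under conjugate heat flow. Variance control then gives $|z(p(\text{center}))|\le C\sqrt{T-t}$. After subtracting this normalized constant the growth bound becomes $|u|\le C_1(1+d_g(x,x_*))$. The bound extends across orbifold points by continuity and the Riemann extension theorem in orbifold charts.
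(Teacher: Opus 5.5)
Your gradient bounds, the passage to a holomorphic limit, and the extension across orbifold points agree with the paper. It uses Hartogs where you use Riemann extension; both work, since $u$ is locally Lipschitz.

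\textbf{Where the routes differ: the normalization.} The paper does not center at $q_0$. It sets $b_i:=p(\psi_i(x_*))$ and $u_i:=\tau_i^{-1/2}\bigl(z\circ p-z(b_i)\bigr)\circ\psi_i$, so $u_i(x_*)=0$. Well-definedness on a compact $K\Subset X_{\reg}$ then follows by joining $K$ to $x_*$ inside $X_{\reg}$ with curves of bounded $g$-length. Using only $c\,p^*\omega_\Sigma\le\omega(T-\tau_i)$, this gives $d_{\omega_\Sigma}(p(\psi_i(x)),b_i)\le C_K\sqrt{\tau_i}$. No conjugate heat kernel or $H_n$-center is needed, so what you call the main obstacle is self-imposed.

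What your route buys is the stronger fact that the limit lies over $q_0\in\Sigma\setminus\Delta$. That is exactly what places you where $\omega(t)\le C_{\mathcal K}\omega_0$ holds and $p$ is a submersion. The paper uses this implicitly when it chooses $U_0\ni b_\infty$. It only invokes center localization (the Conlon--Hallgren--Ma Lemma~2.10 and Claim~2.12) later, in the Type~I proof.

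\textbf{If you keep your route, the key step needs repair.}
\begin{itemize}
\item \emph{The integral is ill-defined.} $z\circ p$ is not defined on all of $M$, while $\nu_{x_0,T;t}$ has full support. So ``$\int z\circ p\,d\nu_{x_0,T;t}$ is constant'' is meaningless; compact $M$ carries no nonconstant global harmonic function. Work instead with $\phi\circ p$ for smooth $\phi$ on $\Sigma$. The global bound $\omega(t)\ge c\,p^*\omega_\Sigma$ gives $|\Delta_{g(t)}(\phi\circ p)|\le C$. Hence $\bigl|\int\phi\circ p\,d\nu_{x_0,T;t}-\phi(q_0)\bigr|\le C(T-t)$, first for $\nu_{x_0,T_i;t}$ and then in the limit.
\item \emph{A cutoff of $z$ does not localize the center.} Such a cutoff also vanishes off the chart, so smallness of its value at the center says nothing. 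Take instead $\phi\ge c\,d_{\omega_\Sigma}(q_0,\cdot)^2$ with $\phi(q_0)=0$. This gives $\int d_{\omega_\Sigma}^2(q_0,p(y))\,d\nu_t(y)\le C(T-t)$. Combine it with the variance bound pushed down by $p$, namely $\int d_{\omega_\Sigma}^2(p(z_t),p(y))\,d\nu_t(y)\le c^{-1}H_n(T-t)$. Together these yield $d_{\omega_\Sigma}(p(z_t),q_0)\le C\sqrt{T-t}$.
\item \emph{The link to $\psi_i$ is not yet justified.} ``$\psi_i^{-1}$ of the $H_n$-center'' is undefined, since the center need not lie in the image of $\psi_i$. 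What you actually need is $d_{\widehat g_i}(\psi_i(x),z_i)\le C_K$ for $x\in K$. This holds because $\psi_i$ of a small ball around $x$ carries $\nu$-mass bounded below. Meanwhile Chebyshev and the variance bound make the mass outside $B_{\widehat g_i}(z_i,A)$ small for $A$ large, so the two sets must meet.
\end{itemize}
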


\begin{proof}
Fix $x_*\in X_{\reg}$, set $y_i:=\psi_i(x_*)$ and $b_i:=p(y_i)$. After passing to a subsequence, $b_i\to b_\infty\in\Sigma$. Choose
coordinate discs $b_\infty\in U_0\Subset U\Subset\Sigma$ and a holomorphic coordinate $z:U\to\CC$. For any compact set $K\Subset X_{\reg}$, $p(\psi_i(K))\subset U_0$ for all sufficiently large $i$. Joining points of $K$ to
$x_*$ by curves of uniformly bounded $g$-length and using smooth
convergence together with $c\,p^*\omega_\Sigma\leq\omega(T-\tau_i)$ gives
\[
    d_{\omega_\Sigma}
    \bigl(p(\psi_i(x)),b_i\bigr)
    \leq C_K\sqrt{\tau_i},
    \qquad x\in K.
\]
Thus $u_i
    :=
    \tau_i^{-1/2}
    \bigl(z\circ p-z(b_i)\bigr)\circ\psi_i$ are defined on any compact subset of $X_{\reg}$.

By the direct computation:
\begin{equation}\label{eq:rescaled-gradient-identity}
    |du_i|_{\psi_i^*\widehat g_i}^{\,2}
    =
    |d(z\circ p)|_{g(T-\tau_i)}^{\,2}\circ\psi_i .
\end{equation}
The estimates $c\,p^*\omega_\Sigma
    \leq \omega(T-\tau_i)
    \leq C\omega_0$ therefore imply
\[
    C^{-1}|d(z\circ p)|_{g_0}\leq|d(z\circ p)|_{g(T-\tau_i)}
    \leq C|dz|_{\omega_\Sigma}.
\]

Since $p$ is a holomorphic submersion and $dz$ is nowhere zero, $\inf \nolimits_{p^{-1}(\overline{U_0})}
    |d(z\circ p)|_{g_0}>0$, so we have
\begin{equation}\label{eq:uniform-ui-gradient}
    0<c_1
    \leq |du_i|_{\psi_i^*\widehat g_i}
    \leq C_1
\end{equation}
for uniform constants $c_1,C_1>0$ on compact subsets of $X_{\reg}$.

Since $u_i(x_*)=0$, the upper gradient bound gives uniform local
$C^0$ bounds. The functions $u_i$ are holomorphic with respect to
$\psi_i^*J_M$, local elliptic regularity and diagonal
arguments therefore give $u_i\longrightarrow u$ in $C^{\infty}_{loc}(X_{reg})$, where $u$ is
$J$-holomorphic. Passing to the limit in
\eqref{eq:uniform-ui-gradient} yields
\[
    0<c_1\leq |du|_g\leq C_1
    \qquad\text{on }X_{\reg}.
\]
Since the singular set is discrete, the intrinsic length metric on
$X_{\reg}$ has completion $X$, the upper gradient bound and
$u(x_*)=0$ give $|u(x)|\leq C_1d_g(x,x_*)$ for any $x\in X_{\reg}$. Thus $u$ has at most linear growth.

Let $a\in X\setminus X_{\reg}$, and choose an orbifold chart $(\widetilde U,0)/\Gamma\longrightarrow(U_a,a)$. The lift $\widetilde u$ is holomorphic on
$\widetilde U\setminus\{0\}$. Since
$\dim_{\mathbb C}\widetilde U=2$, Hartogs' theorem extends
$\widetilde u$ uniquely across $0$. The extension is
$\Gamma$-invariant by uniqueness and therefore descends to $U_a$.
By continuity, we have  $|d\widetilde u(0)|_{\widetilde g}\geq c_1$. Thus $u$ extends to a global orbifold-holomorphic function on $X$
with nonvanishing differential in every uniformizing chart.
\end{proof}

Then we can prove that the orbifold singularities are removable and identify this shrinker.
\begin{theorem}\label{tangent}
    The tangent flow $X$ is $\mathbb{P}^1\times \mathbb{C}$ with standard product shrinker structure.
\end{theorem}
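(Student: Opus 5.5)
We first show that $X$ has no orbifold singularities, then apply Lemma \ref{splitting} with $k=1$, and finally identify the complementary factor as $\mathbb P^1$.

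\textbf{Step 1: removing the singularities.} Let $a\in X\setminus X_{\reg}$ with orbifold chart $(\widetilde U,0)/\Gamma\to(U_a,a)$, where $\Gamma\subset U(2)$ is nontrivial and acts freely on $\CC^2\setminus\{0\}$. By Proposition \ref{prop:base-function}, the lift $\widetilde u$ is $\Gamma$-invariant and $d\widetilde u(0)\neq0$. Differentiating $\widetilde u\circ\gamma=\widetilde u$ at the fixed point $0$ gives $d\widetilde u(0)\circ\gamma=d\widetilde u(0)$ for every $\gamma\in\Gamma$. Hence each $\gamma$ has eigenvalue $1$ on $(\ker d\widetilde u(0))^{\perp}$ with respect to the unitary structure. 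A nontrivial $\gamma$ would then fix a complex line pointwise, contradicting freeness on $\CC^2\setminus\{0\}$. So $\Gamma$ is trivial, $X$ is a smooth complete Kähler--Ricci shrinker, and $u$ is a genuine holomorphic function with $|u|\le C_1(1+d_g)$ and $c_1\le|du|_g\le C_1$ everywhere.

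\textbf{Step 2: splitting.} Lemma \ref{splitting} with $k=1$ and $U=u$ applies, since (\ref{spl}) is exactly the conclusion of Proposition \ref{prop:base-function}. It gives a holomorphic isometry $X\cong Y\times\CC$ with $\CC$ the Gaussian shrinker and $Y$ a complete one-dimensional Kähler--Ricci shrinker.

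\textbf{Step 3: identifying $Y$.} By the classification of complex one-dimensional complete shrinkers, $Y$ is either the Gaussian $\CC$ or the round $\mathbb P^1$ with constant curvature normalized by $\Ric=g$ (cigar-type and other curves are excluded since shrinkers in real dimension two are flat $\mathbb R^2$ or quotients of $S^2$). Since $Y$ is a complex curve, $\mathbb{RP}^2$ cannot occur. We must rule out $Y=\CC$, i.e.\ $X=\CC^2$ flat. Non-flatness follows from Bamler's theory: a tangent flow at a genuinely singular time has pointed Nash entropy $\mathcal N_{x_0,T}(0)<0$, because the flat Gaussian limit would have entropy $0$. By $\varepsilon$-regularity this would give bounded curvature near $(x_0,T)$ at scale $\sqrt{T-t}$, contradicting the fiber collapse.

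I expect this last point to be the main obstacle. The most concrete route is geometric. By Lemma \ref{lem:SSW}, the fiber $F_{p(x_0)}$ has $g(t)$-diameter at most $C(T-t)^{1/3}$ and area $\omega(t)\cdot F=(T-t)c_1\cdot F=2(T-t)$ exactly. After rescaling, the fibers through the image of $\psi_i$ are holomorphic curves of area $2$ and small diameter $o(\tau_i^{-1/6})$. They are level sets of $u_i$, which converge to level sets of $u$; in $\CC^2$ these would be noncompact planes, whose area in a ball of radius $r$ grows like $\pi r^2$, which is incompatible with the fixed total area $2$. More carefully, $u^{-1}(0)$ must be compact: otherwise its area in the ball $B_g(x_*,r)$ would exceed $2$ for large $r$, while the smooth convergence of $\psi_i$ on compact subsets of $X_{\reg}=X$ shows that this area is a limit of areas of subsets of a single fiber, which are at most $2$. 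A compact level set of $u$ forces $Y$ compact, hence $Y=\mathbb P^1$ and $X=\mathbb P^1\times\CC$. The shrinker structure is then the standard product by the splitting of $f$ in Lemma \ref{splitting}, with $\Area(\mathbb P^1)=4\pi$ consistent with the normalization.
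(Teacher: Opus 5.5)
Your Steps 1 and 2 coincide with the paper's proof. The $\Gamma$-invariant covector $d\widetilde u(0)\neq0$ produces a nonzero $\Gamma$-fixed vector, which forces $\Gamma=\{1\}$. Lemma~\ref{splitting} with $k=1$ then gives $X\cong Y\times\CC$.

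The genuine difference is how you exclude $Y=\CC$. The paper does this before splitting and without using $u$. A flat tangent flow is the Gaussian, with $\mathcal N_{x_0,T}(0)=0$, so by \cite[Theorem~2.37]{Bam20b} the flow extends smoothly to $t=T$ on a nonempty open set. A holomorphic disc in a fiber inside that set would then keep positive area, contradicting $\Area_{g(t)}(F_q)\to0$.

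You instead use only two inputs: the cohomological fact that every fiber has the same fixed area for $\widehat g_i$, and the smooth convergence on compacta of $X=X_{\reg}$ that already produced $u$. Compact pieces of $u^{-1}(0)$ are smooth limits (implicit function theorem, $du\neq0$) of pieces of $u_i^{-1}(0)$, and $\psi_i$ embeds these into the single fiber $F_{b_i}$. Hence $u^{-1}(0)\cong Y$ has finite area and cannot be the flat plane. This is correct and more elementary, since it needs no entropy or $\varepsilon$-regularity. It is precisely the volume argument the paper itself uses later for projective bundles in the proof of Theorem~1.4. What the paper's route buys in exchange is nonflatness of every such tangent flow without reference to $u$.

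A few side remarks, none of which affects the argument you actually rely on.
\begin{itemize}
\item Your heuristic (a) is misstated. Curvature bounded at scale $\sqrt{T-t}$ is perfectly compatible with fiber collapse; the shrinking cylinder does exactly this. What zero entropy gives is regularity at a \emph{fixed} scale, i.e.\ smooth extension to $t=T$ near $x_0$, and that is what the paper uses.
\item After rescaling, Lemma~\ref{lem:SSW} bounds fiber diameters by $C\tau_i^{-1/6}\to\infty$, so they are not small.
\item The constant $2$ for the rescaled fiber area should be read with $[\Ric]=2\pi c_1(M)$, i.e.\ as $4\pi$. That is what makes it consistent with the round $\PP^1$ satisfying $\Ric=g$; only finiteness matters for your argument.
\end{itemize}
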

\begin{proof}
    By Proposition~\ref{prop:base-function},
$d\widetilde u(0)\neq0$ and is $\Gamma$-invariant, its dual
with respect to the $\Gamma$-invariant lifted metric is a nonzero
vector fixed by $\Gamma$.  Since
$\Gamma\subset U(2)$ acts freely on the punctured local cover, which implies
$\Gamma=\{1\}$.  Thus the $X$ is smooth.

Suppose the tangent shrinker $X$ is flat, it is the Gaussian shrinker $\CC^2$, whose pointed Nash entropy $\mathcal N_{x,T}(0)$ is
zero, by \cite[Theorem~2.37]{Bam20b}, there exists a nonempty open set
$U\subset M$ s.t. $g(t)$ extends smoothly to $t=T$.
Choose a holomorphic disc $D\Subset U\cap F_q$, we have $\Area_{g(T)}(D)>0$, contradicting $\Area_{g(t)}(D)
\leq \Area_{g(t)}(F_q)\longrightarrow0$. So $X$ is nonflat.

    Due to $u$ is a nonconstant holomorphic function of linear growth, then by Lemma \ref{splitting}, $X$ is holomorphically isometric to $N\times \mathbb{C}$, where $N$ is a complete complex 1-dimensional Kähler Ricci shrinker i.e. $\mathbb{P}^1$.
\end{proof}
\subsection{Proof of main theorems}
By Theorem \ref{tangent}, we know every tangent flow based at 
$(x,T)$ for $x\in p^{-1}(\Sigma \setminus \Delta)$ is the standard shrinking cylinder
$\PP^1\times\CC$, we denote by $\mathcal N_{\mathrm{cyl}}$ the pointed Nash entropy of the standard shrinking cylinder. We first establish the following moving basepoint entropy lemma.

\begin{lemma}\label{lem:entropy}
For $\tau\in(0,T)$, $\mathcal N_{x,T}(\tau)$ depends only on the selection of $q\in \Sigma \setminus \Delta$ and for every compact set $\mathcal{K}\Subset\Sigma\setminus\Delta$, we have
\begin{equation*}\label{eqen}
 \lim_{\tau\searrow0}
 \sup_{q\in \mathcal{K}}
 \left|
   \mathcal N_{x,T}(\tau)-\mathcal N_{\mathrm{cyl}}
 \right|
 =0, \quad \text{for any}\quad x\in F_q.
\end{equation*}
\end{lemma}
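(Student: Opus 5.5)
We first show that $\mathcal N_{x,T}(\tau)$ is constant along each fiber, and then obtain uniform convergence by combining monotonicity with a compactness argument.

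\emph{Step 1: constancy along fibers.} Fix $q\in\Sigma\setminus\Delta$ and $x,x'\in F_q$. Apply Proposition~\ref{bamen} with $t_1=t_2=T_i\nearrow T$, $s=T-\tau$, and $t^*\in(s,T)$. For the flows based at $(x,T_i)$ and $(x',T_i)$ this gives
\[
\mathcal N_{x,T_i}(T_i-s)-\mathcal N_{x',T_i}(T_i-s)\le \Bigl(\frac{n}{2(t^*-s)}-R_{\min}\Bigr)^{1/2} d_{W_1}^{g_{t^*}}\bigl(\nu_{x,T_i;t^*},\nu_{x',T_i;t^*}\bigr).
\]
The conjugate heat kernel estimate contracts $W_1$-distance forward in time, so $d_{W_1}^{g_{t^*}}(\nu_{x,T_i;t^*},\nu_{x',T_i;t^*})\le d_{g(T_i)}(x,x')$. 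By Lemma~\ref{lem:SSW}, $d_{g(T_i)}(x,x')\le \operatorname{diam}_{g(T_i)}F_q\le C(T-T_i)^{1/3}\to0$. Letting $i\to\infty$ and using the $W_1$-convergence $\nu_{x,T_i;t}\to\nu_{x,T;t}$ together with the convergence of Nash entropies, the difference is $\le0$. By symmetry $\mathcal N_{x,T}(\tau)=\mathcal N_{x',T}(\tau)$, so the Nash entropy is a function $N_q(\tau)$ of $q$ alone.

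\emph{Step 2: pointwise convergence.} For fixed $x\in F_q$, the quantity $\mathcal N_{x,T}(\tau)$ is monotone nonincreasing in $\tau$, so $\lim_{\tau\searrow0}\mathcal N_{x,T}(\tau)$ exists. By \cite[Theorem 2.18]{Bam20b} this limit equals the entropy of any tangent flow. By Theorem~\ref{tangent} every tangent flow at $(x,T)$ is the standard cylinder, so the limit is $\mathcal N_{\mathrm{cyl}}$.

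\emph{Step 3: uniformity over $\mathcal K$.} Proposition~\ref{bamen} also shows that, for fixed $\tau$, the map $q\mapsto N_q(\tau)$ is continuous on $\Sigma\setminus\Delta$: the $W_1$ distance at time $t^*$ between $\nu_{x,T}$ and $\nu_{x',T}$ tends to $0$ as $x'\to x$, since both are limits from times $T_i$, where distances are controlled by $\omega(t)\le C_{\mathcal K}\omega_0$ from Lemma~\ref{lem:SSW}. For each fixed $q$, the family $N_q(\tau)$ decreases monotonically to $\mathcal N_{\mathrm{cyl}}$ as $\tau\searrow0$, and $\mathcal K$ is compact. Dini's theorem therefore gives uniform convergence on $\mathcal K$. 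Note that $N_q(\tau)\ge\mathcal N_{\mathrm{cyl}}$ by monotonicity, so the absolute value in the statement is harmless.

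\emph{Main obstacle.} The hardest point is the continuity needed for Dini: we need a uniform Lipschitz control in $q$ at each fixed $\tau$, which must come from Proposition~\ref{bamen} at an intermediate time $t^*$. The limiting procedure $T_i\to T$ must be carried out carefully there, in particular the constant $\log\bigl((t_2-s)/(t^*-s)\bigr)$ term. This is handled by choosing $t_1=t_2$ and using $W_1$ monotonicity to push the comparison to time $T_i$, where the metrics are uniformly bounded above by $C_{\mathcal K}\omega_0$.
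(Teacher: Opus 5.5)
Your argument follows the paper's proof: Proposition~\ref{bamen} with the SSW fiber-diameter bound gives constancy along fibers, and the same comparison with $\omega(t)\le C_{\mathcal K}\omega_0$ gives continuity in $q$. Theorem~\ref{tangent} then identifies the pointwise limit as $\mathcal N_{\mathrm{cyl}}$, and monotonicity plus Dini gives uniformity on $\mathcal K$. Two slips need fixing, neither of which changes the approach.

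First, your displayed inequality in Step~1 drops the term $\frac n2\log\frac{T_i-s}{t^*-s}$ from Proposition~\ref{bamen}. For $t^*<T_i$ this term is strictly positive. As $i\to\infty$ it tends to $\frac n2\log\frac{\tau}{t^*-s}>0$, so ``the difference is $\le 0$'' does not follow as written. The term vanishes because $t^*=t_2$, not because $t_1=t_2$, so your ``Main obstacle'' remark misidentifies what removes it.

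The paper simply takes $t^*=t_1=t_2=t_j$. Then the Wasserstein distance is between Dirac masses and equals $d_{g(t_j)}(x,x')$, so the $W_1$-contraction is not needed at all. Alternatively, you can keep your intermediate $t^*$ and let $t^*\nearrow T$ after $i\to\infty$; the coefficient $\bigl(\frac{n}{2(t^*-s)}-R_{\min}\bigr)^{1/2}$ stays bounded since $t^*-s\to\tau>0$. The same correction applies to your continuity argument in Step~3.

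Second, $\mathcal N_{x,T}(\tau)$ is nonincreasing in $\tau$, as you correctly say in Step~2. So $N_q(\tau)$ \emph{increases} to $\mathcal N_{\mathrm{cyl}}$ as $\tau\searrow0$, and $N_q(\tau)\le\mathcal N_{\mathrm{cyl}}$, not $\ge$. Dini's theorem applies to monotone families in either direction, so this sign error does not affect the conclusion.
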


\begin{proof}
Let $t_j\nearrow T$. Fix $\tau\in(0,T)$ and set $s=T-\tau$. By Proposition
\ref{bamen}, we select $t_1=t_2=t^*=t_j$, then we have
\begin{equation}\label{entropy}
 \left|
 \mathcal N_{x,t_j}(t_j-s)
 -
 \mathcal N_{x',t_j}(t_j-s)
 \right|
 \le
 \left(
   \frac{4}{2(t_j-s)}-R_{\min}
 \right)^{1/2}
 d_{g(t_j)}(x,x').
\end{equation}
Since $t_j-s\to\tau$, the coefficient in
\eqref{entropy} is uniformly bounded. If
$x,x'\in F_q$, by Lemma \ref{lem:SSW}, we have
\[
 d_{g(t_j)}(x,x')
 \le \diam_{g(t_j)}F_q
 \longrightarrow0.
\]
Passing to the limit in \eqref{entropy}, we obtain $\mathcal N_{x,T}(\tau)=\mathcal N_{x',T}(\tau)$. Thus $\mathcal N_{x,T}(\tau)$ depends only on the selection of $q\in \Sigma \setminus \Delta$.

Let $U\Subset\Sigma \setminus \Delta$ be a coordinate disc and
choose a smooth section $\sigma\colon U\to M$. After shrinking $U$ if necessary, Lemma \ref{lem:SSW} implies
\[
 d_{\omega_{(t_j)}}\bigl(\sigma(q),\sigma(q')\bigr)
 \le C_U d_{\omega_\Sigma}(q,q'),
 \qquad q,q'\in U.
\]
Applying \eqref{entropy} with
$x=\sigma(q)$ and $x'=\sigma(q')$, and then passing to the limit, gives
\[
 \left|
 \mathcal N_{x,T}(\tau)-\mathcal N_{x',T}(\tau)
 \right|
 \le C_{\tau,U}d_{\omega_\Sigma}(q,q').
\]
Thus $\mathcal N_{x,T}(\tau)$ is continuous on $\Sigma \setminus \Delta$ (it is decied by $q\in \Sigma \setminus \Delta$). Due to the tangent flow on $(x,T)$ is $\mathbb{P}^1\times \mathbb{C}$ for any $x\in M$, we have
\begin{align*}
   \lim_{\tau\searrow0}\mathcal N_{x,T}(\tau)=\mathcal N_{\mathrm{cyl}}
 \qquad\text{for every }q\in \Sigma \setminus \Delta. 
\end{align*}
If $\tau_k\searrow0$, by the monotonicity of the pointed Nash entropy, we know $\mathcal N_{x,T}(\tau_k)$ increase pointwise on 
$\Sigma$ to constant $\mathcal N_{\mathrm{cyl}}$.
Dini's theorem therefore gives uniform convergence along
$\{\tau_k\}$. Since the sequence $\{\tau_k\}$ is arbitrary,
\eqref{eqen} follows.
\end{proof}

\begin{proof}[Proof of Theorem 1.2]
Suppose otherwise. Then there exist $x_i\in p^{-1}(\mathcal{K})$ and $t_i\nearrow T$
such that $(T-t_i)|\Rm_{\omega(t_i)}|_{\omega(t_i)}(x_i)
 \longrightarrow\infty$. Set $\tau_i:=T-t_i$ and $q_i:=p(x_i)$. After passing to a subsequence, we assume $q_i\to q_\infty$. Consider the rescaled flow $\omega_i(t):=\tau_i^{-1}\omega(T+\tau_i t)$ on $t\in[-T/\tau_i,0)$. Let $\bigl(\nu^i_{x_i,0;t}\bigr)_{t<0}$ be the conjugate heat flow for $\omega_i(t)$ obtained by rescaling
a conjugate heat kernel of the original flow based at
$(x_i,T)$. By \cite[Theorem 7.4]{Bam23}, after passing to a subsequence, we have
\[
\left(
  (M,g_i(t))_{t\in[-T/\tau_i,0)},
  (\nu^i_{x_i,0;t})_{t\in[-T/\tau_i,0)}
\right)
\xrightarrow[\displaystyle i\to\infty]{\displaystyle \mathbb F, \mathfrak{C} }
\left(
  \mathcal X,
  (\nu_t)_{t\in(-\infty,0)}
\right).
\]
For every fixed $\tau>0$, the scaling invariance of the pointed Nash
entropy gives $\mathcal N^i_{x_i,0}(\tau)
 =
 \mathcal N_{x_i,T}(\tau_i\tau)$. Since $x_i\in F_{q_i}$, Lemma~\ref{lem:entropy} yields
\[
 \mathcal N^i_{x_i,0}(\tau)
 =
 \mathcal N_{x_i,T}(\tau_i\tau)
 \longrightarrow
 \mathcal N_{\mathrm{cyl}}
 \qquad\text{for any }\tau>0,
\]
as $i \to \infty$. Thus the pointed Nash entropies of the limit are constant for $\tau \in(0,+\infty)$. By \cite[Theorem 2.18, 2.46]{Bam20b} and \cite[Theorem 2.5]{HJ23}, we know the limit $(\mathcal X, (\nu_t)_{t\in(-\infty,0)})$ is an orbifold Kähler Ricci shrinker $(X,g,J,f)$ with isolated orbifold singularities. 

Let $(z_i, t_i)$ be an $H_4$-center of $(x_i,T)$.  By
\cite[Lemma~2.10, Claim 2.12]{CHM25}, We have $d_{\omega_\Sigma}\bigl(p(z_i),q_i\bigr) \leq C\sqrt{\tau_i}$, and for any compact $K\Subset X_{\reg}$, the convergence maps satisfy $d_{\omega_i(-1)}\bigl(\psi_i(x),z_i\bigr)\leq C_K$ for all $x\in K$.
 Since $\omega(t_i)\geq c\,p^*\omega_\Sigma$, it follows that
\begin{align*}
d_{\omega_\Sigma}\bigl(p(\psi_i(x)),q_i\bigr)
&\le
d_{\omega_\Sigma}\bigl(p(\psi_i(x)),p(z_i)\bigr)
+d_{\omega_\Sigma}\bigl(p(z_i),q_i\bigr)\\
&\le
C\,d_{\omega(t_i)}\bigl(\psi_i(x),z_i\bigr)
+C\sqrt{\tau_i}\\
&=
C\sqrt{\tau_i}\,
d_{\omega_i(-1)}\bigl(\psi_i(x),z_i\bigr)
+C\sqrt{\tau_i}\\
&\le C_K\sqrt{\tau_i}.
\end{align*}
for any $x\in K$. Thus, if $z$ is a coordinate near $q_\infty$, then $u_i=\tau_i^{-1/2} \bigl(z\circ p-z(q_i)\bigr)\circ\psi_i$ is well defined on every fixed compact subset of $X_{\reg}$ for sufficiently large $i$ and $u_i$ converge to a global holomorphic function $u$ on $X_{\reg}$. Then the same arguments on Proposition
\ref{prop:base-function} and Theorem \ref{tangent} implies $X$ is smooth and $X$ is holomorphically isometric $\mathbb{P}^1\times \mathbb{C}$ with standard product shrinker structure. 

After an affine transformation, we assume $u(y,w)=w$ and set $Z\cong \mathbb{P}^1=u^{-1}(0)$. Choose a relatively compact tubular neighborhood $\mathcal U$ of
$Z$. Since $u_i\to u$ smoothly on $\overline{\mathcal U}$ for
sufficiently large $i$, the implicit function theorem gives a compact
complex submanifold $Z_i\subset u_i^{-1}(0)\cap\mathcal U$ which is a graph over $Z$. By the definition of $u_i$, we know $\psi_i(Z_i)\subset F_{q_i}$.
The image $\psi_i(Z_i)$ is a nonempty compact complex submanifold of $F_{q_i}\cong\PP^1$ of the same complex
dimension. Hence $\psi_i(Z_i)=F_{q_i}$.

Since $x_i\in F_{q_i}$, there exists $w_i\in Z_i\subset\mathcal U$
such that $\psi_i(w_i)=x_i$. Smooth convergence on the fixed compact
set $\overline{\mathcal U}$ gives
\[
\begin{aligned}
    \tau_i
    \left|\Rm_{\omega(t_i)}\right|_{\omega(t_i)}(x_i)
    =
    |
        \Rm_{\psi_i^*\omega_i(-1)}
    |_{\psi_i^*\omega_i(-1)}(w_i) 
    \leq C_{\mathcal U},
\end{aligned}
\]
contradicting the choice of $(x_i,t_i)$. Therefore the flow is Type~I.
\end{proof}
Combining the above Type I curvature estimates, we derive the following optimal collapse rate of regular fibers.
\begin{proof}[Proof of corollary 1.3]
It is enough to consider $t$ close to $T$. Set $\tau=T-t$ and $\widehat g_t=\tau^{-1}g(t)$, and fix $\mathcal K
\Subset\Sigma\setminus\Delta$. The
Type~I estimate and noncollapsing  give
uniformly bounded geometry around $p^{-1}(\mathcal{K})$. Cover $\mathcal K$ by finitely many coordinate neighborhood $U_a$ and define a local holomorphic function $u_{q,t}=\tau^{-1/2}\bigl(z\circ p-z(q)\bigr)$ on some $p^{-1}(U_a)$ s.t. $q\in U_a$. The localized Schwarz estimates and the scaling of $\widehat g_t$
give
\[
0<c_{\mathcal K}
\leq
|du_{q,t}|_{\widehat g_t}
\leq
C_{\mathcal K}
\]
near $F_q$. Since $u_{q,t}$ is holomorphic and vanishes on $F_q$,
interior elliptic estimates give
\[
|\widehat\nabla^2u_{q,t}|_{\widehat g_t}
\leq C_{\mathcal K}
\qquad\text{on }F_q.
\]

Let $A_{q,t}$ denote the second fundamental form of
$(F_q,\widehat g_t|_{F_q})\subset(M,\widehat g_t)$. Since $F_q=u_{q,t}^{-1}(0)$, the direct computation gives
\[
du_{q,t}\bigl(A_{q,t}(V,W)\bigr)
=
-\widehat\nabla^2u_{q,t}(V,W),
\qquad V,W\in TF_q.
\]
which implies $|A_{q,t}|_{\widehat g_t}\leq C_{\mathcal K}$. Bounded ambient geometry and the
above estimate imply that there exist
$r_{\mathcal K},v_{\mathcal K}>0$ such that
\[
\operatorname{Area}_{\widehat g_t|_{F_q}}
B_{\widehat g_t|_{F_q}}(x,r_{\mathcal K})
\geq v_{\mathcal K},
\qquad
x\in F_q.
\]
On the other hand, the cohomological assumption gives
\[
\operatorname{Area}_{\widehat g_t|_{F_q}}(F_q)
=
\tau^{-1}\int_{F_q}\omega(t)
=
c_1(M)\cdot F_q
=
2.
\]
Packing disjoint balls of radius $r_{\mathcal K}$ along a minimizing
geodesic in $F_q$ therefore gives
\[
\operatorname{diam}(F_q,\widehat g_t|_{F_q})\leq C_{\mathcal K}.
\]
Fix $x\in F_q$ and set $D_{q,t}
=
\operatorname{diam}
\bigl(F_q,\widehat g_t|_{F_q}\bigr)$. Since $F_q=B_{\widehat g_t|_{F_q}}(x,D_{q,t})$ and
$K_{\widehat g_t|_{F_q}}\geq-C_{\mathcal K}$, the Bishop-Gromov volume comparison gives
\[
2
=
\operatorname{Area}_{\widehat g_t|_{F_q}}(F_q)
\leq
\mathrm{Area}_{-C_{\mathcal K}}(D_{q,t}),
\]
where $\operatorname{Area}_{-C_{\mathcal K}}(r)$ denotes the area of a geodesic ball of radius $r$ in the two‑dim space form of constant curvature $-C_{\mathcal{K}}$. Thus
\[
c_{\mathcal K}
\leq
\operatorname{diam}(F_q,\widehat g_t|_{F_q})
\leq
C_{\mathcal K}.
\]
Rescaling gives
\[
c_{\mathcal K}\sqrt{T-t}
\leq
\operatorname{diam}(F_q,g(t)|_{F_q})
\leq
C_{\mathcal K}\sqrt{T-t}.
\]
\end{proof}

Now we discuss the tangent shrinker associated with high dimensional collapsing projective bundles under the Type I curvature assumption. The Type‑I condition naturally yields compactness. More precisely, for every sequence $\tau_i>0, \tau_i\searrow 0$ and any $x\in M$, the pointed rescaled Ricc flows $(M,g_i(t),x)$ defined by $g_i(t):=\tau_i^{-1} g(T+\tau_it)$ on$[-\tau^{-1}_iT,0)$ subconverge in the pointed $C^\infty$-Cheeger--Gromov sense to a complete K\"ahler Ricci shrinker with bounded curvature
\begin{align*}
    \bigl(M, g_j(s), J_M, x\bigr)_{s\in[-\tau^{-1}_iT,0)} 
\xrightarrow{\text{pointed-}C^\infty\text{-Cheeger--Gromov}} 
\bigl(X, g_\infty(s), J_\infty, x_\infty\bigr)_{s\in(-\infty,0)}.
\end{align*}
\begin{lemma}\label{splitting2}
There is a holomorphic map $U=(u^1,\dots,u^m)\colon X\to \mathbb{C}^m$ of at most linear growth such that, for some nonzero positive constants $c_0, C_0$
\begin{equation}
c_0|\xi|^2 \leq |\sum_{\alpha=1}^m \xi_\alpha du^\alpha|_g^2 \leq C_0|\xi|^2
\label{eq:4.1}
\end{equation}
for any $\xi=(\xi_1,\ldots,\xi_m)\in\mathbb C^m$.
\end{lemma}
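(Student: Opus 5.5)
We construct the map $U$ by the same rescaling procedure as in Proposition~\ref{prop:base-function}, now with $m=\dim_{\CC}B$ coordinate functions. Under the Type~I hypothesis the convergence is smooth Cheeger--Gromov, so there is no orbifold singular set to handle. Let $\psi_i\colon V_i\to M$ be the embeddings realizing the pointed convergence, with $\psi_i(x_\infty)=x$. Since $x$ is fixed, $b:=p(x)\in B$ is fixed. Choose a holomorphic coordinate chart $z=(z^1,\dots,z^m)\colon W\to\CC^m$ on a neighborhood $W$ of $b$, with $b\in W_0\Subset W$. Define
\[
u_i^\alpha:=\tau_i^{-1/2}\bigl(z^\alpha\circ p-z^\alpha(b)\bigr)\circ\psi_i .
\]

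\textbf{Step 1: the $u_i$ are defined on compact sets.} We need the analogue of Lemma~\ref{lem:SSW} for projective bundles. The key input is the lower Schwarz bound $\omega(t)\geq c\,p^*\omega_B$ from \cite{SSW}, which holds uniformly on $M$ under $[\omega_0]-Tc_1(M)=[p^*\omega_B]$. For a compact $K\Subset X$, points of $\psi_i(K)$ lie at $g(T-\tau_i)$-distance at most $C_K\sqrt{\tau_i}$ from $x$. This lower bound therefore gives $d_{\omega_B}(p(\psi_i(y)),b)\leq C_K\sqrt{\tau_i}$ for $y\in K$. Hence $p(\psi_i(K))\subset W_0$ for large $i$, and the $u_i^\alpha$ are defined on $K$ with $u_i(x_\infty)=0$.

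\textbf{Step 2: two-sided gradient bounds.} Rescaling gives
\[
\Bigl|\sum_\alpha\xi_\alpha du_i^\alpha\Bigr|^2_{\psi_i^*\widehat g_i}
=\Bigl|\sum_\alpha\xi_\alpha d(z^\alpha\circ p)\Bigr|^2_{g(T-\tau_i)}\circ\psi_i .
\]
\emph{Upper bound.} Let $\eta=\sum\xi_\alpha dz^\alpha$, a $(1,0)$-form pulled back from $B$. Then $\omega(t)\geq c\,p^*\omega_B$ gives $|p^*\eta|_{\omega(t)}\leq c^{-1/2}|\eta|_{\omega_B}\leq C|\xi|$ on $p^{-1}(\overline{W_0})$.

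\emph{Lower bound.} Here we need $\omega(t)\leq C\omega_0$ on $p^{-1}(\overline{W_0})$, the upper Schwarz-type estimate of \cite{SSW}. For projective bundles this holds globally because every fiber is regular. It gives $|p^*\eta|_{\omega(t)}\geq C^{-1/2}|p^*\eta|_{\omega_0}$. Since $p$ is a holomorphic submersion, $p^*\colon T^*_bB\to T^*_yM$ is injective. Compactness of $p^{-1}(\overline{W_0})$ together with homogeneity in $\xi$ then gives $|p^*\eta|_{\omega_0}\geq c'|\xi|$. Together these yield the uniform bounds \eqref{eq:4.1} for the $u_i$ on compact sets.

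\textbf{Step 3: passing to the limit.} The gradient bound and $u_i(x_\infty)=0$ give local $C^0$ bounds. The $u_i$ are holomorphic for $\psi_i^*J_M\to J_\infty$, so elliptic estimates and a diagonal argument yield $u_i\to U$ in $C^\infty_{loc}(X)$. The limit $U$ is $J_\infty$-holomorphic and satisfies \eqref{eq:4.1} in the limit. Integrating the upper bound along minimizing geodesics from $x_\infty$, with $U(x_\infty)=0$, gives $|U(y)|\leq C_0^{1/2}d_g(y,x_\infty)$, so $U$ has at most linear growth. The estimate is stated on the time $-1$ slice; the same construction applies to $g_\infty(s)$ for any fixed $s$.

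The main obstacle is the upper bound $\omega(t)\leq C\omega_0$, which drives the lower gradient bound. In higher dimension this estimate needs to be quoted from \cite{SSW}, whose setting is projective bundles with $\alpha_T=[p^*\omega_B]$. If only a weaker statement were available there, I would first derive it by a parabolic Schwarz lemma for $\operatorname{tr}_{\omega(t)}\omega_0$ restricted to fiber directions. Alternatively, I would bound $|p^*\eta|_{\omega(t)}$ from below using the volume-form estimate $\omega(t)^n\geq c(T-t)^{s}\,\omega_0^n$ combined with the fiber upper bound.
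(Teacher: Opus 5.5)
Your proposal is correct and follows essentially the same route as the paper: you use the rescaled base coordinates $\tau_i^{-1/2}\bigl(z^\alpha\circ p-z^\alpha(b)\bigr)\circ\psi_i$, obtain \eqref{eq:4.1} uniformly from the two-sided Schwarz estimates $c\,p^*\omega_B\leq\omega(t)\leq C\omega_0$ of \cite{SSW}, and then pass to the limit as in Proposition~\ref{prop:base-function}. Your write-up is more explicit than the paper's, which only refers back to that proposition, and you are right that the smooth Type~I convergence makes the orbifold-extension step unnecessary.
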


\begin{proof}
Following the same notation as in Proposition \ref{prop:base-function}, we set $u_i^\alpha = \tau_i^{-1/2}(z^\alpha\circ p - z^\alpha(b_i))\circ \psi_i$ and $U_i:=(u_1^\alpha,\dots,u_m^\alpha)$, where $z^\alpha$ is a holomorphic coordinate on a relatively compact neighborhood of $B$, \eqref{eq:4.1} from the two-sided Schwarz Lemma and smooth convergence. The rest of the proof is the same as in Proposition \ref{prop:base-function}.
\end{proof}

\noindent
\begin{proof}[Proof of Theorem 1.4.]
Combining Lemma \ref{splitting} with Lemma \ref{splitting2}, we obtain a holomorphic isometric splitting $X\cong Y\times \mathbb{C}^m$, where $Y$ is a complete Kähler–Ricci shrinker and $\mathbb{C}^m$ is the  Gaussian shrinker.

Let $F = p^{-1}(b_0)\cong \mathbb{P}^s$ and $\widehat{\omega}_i = \tau_i^{-1}\omega(T-\tau_i)$. We have
\begin{equation}
\mathrm{Vol}_{\widehat{g}_i}(F) = \frac{1}{s!}\int_F \widehat{\omega}_i^s = \frac{(2\pi)^s}{s!}\int_{\mathbb{P}^s} c_1(\mathbb{P}^s)^s =: V_s.
\label{eq:4.2}
\end{equation}
Normalize the splitting so that $X=Y\times \mathbb{C}^m$ and $U(y,z)=z$. By the definition of $U_i$, $\psi_i(U_i^{-1}(0))\subset F$. For every relative compact domain $\Omega$ in $Y$, the implicit function theorem gives graph $\Omega_i\subset U_i^{-1}(0)$ converging smoothly to $\Omega$. Hence
\[
\mathrm{Vol}_{g_Y}(\Omega) = \lim_{i\to\infty} \mathrm{Vol}_{\widehat{g}_i}\big(\psi_i(\Omega_i)\big) \leq \mathrm{Vol}_{\widehat{g}_i}(F) = V_s.
\]
Exhausting $Y$ gives $\mathrm{Vol}(Y,g_Y)\leq V_s$. Since a complete noncompact Ricci shrinker has infinite volume [MW12, Theorem 6.1], $Y$ is compact.

We set $\mathcal{N}_\varepsilon:= Y\times \overline{B_\varepsilon(0)}$. By the implicit function theorem, compact complex submanifold $Y_i:=U_i^{-1}(0)\cap \mathcal{N}_\varepsilon$ is a  graph over $Y$ for all large $i$. By the definition of $U_i$, we have $\psi_i(Y_i)\subset F$. Since $\psi_i(Y_i)$ is compact and has the same complex dimension as $F$, thus $\psi_i(Y_i)=F$ and $Y$ is diffeomorphic to $\mathbb{P}^s$. The Hirzebruch-Kodaira-Yau rigidity theorem \cite{yau1977calabi} and the  Bando-Mabuchi uniqueness theorem \cite{BM87} give $Y\cong \mathbb{P}^s$ biholomorphically with Fubini-Study metric, so $X\cong \mathbb{P}^s \times \mathbb{C}^m$ with the standard product shrinker structure.
\end{proof}

\bibliographystyle{alpha}
\bibliography{ref.bib}
\section*{Author Information}
\noindent\textbf{Tongxin Xu}$^*$\\
School of Mathematical Sciences, Capital Normal University\\
Email: 2250501032@cnu.edu.cn
\vspace{0.5em}

\noindent\textbf{Zhenlei Zhang}$^\dagger$\\
School of Mathematical Sciences, Capital Normal University\\
Email: zhleigo@aliyun.com\\
\end{document}